\newtheorem{theorem}{Theorem}[section]
\newtheorem{lemma}[theorem]{Lemma}
\newtheorem{corollary}[theorem]{Corollary}
\theoremstyle{definition}
\newtheorem{remark}[theorem]{Remark}
\numberwithin{equation}{section}
\begin{document}

\title[The quantization error for Ahlfors-David measures]
{Asymptotic local uniformity of the quantization error for Ahlfors-David probability measures}
\thanks{The author is supported by National Natural Science Foundation of China No. 11571144.}
\author{Sanguo Zhu}
\address{School of Mathematics and Physics, Jiangsu University
of Technology\\ Changzhou 213001, China.}
\email{sgzhu@jsut.edu.cn}

\subjclass[2000]{Primary 28A80, 28A78; Secondary 94A15}
\keywords{Quantization error, Ahlfors-David measure, Optimal sets, Voronoi partition, Local uniformity.}

\begin{abstract}
Let $\mu$ be an Ahlfors-David probability measure on $\mathbb{R}^q$, namely, there exist some constants $s_0>0$ and $\epsilon_0,C_1,C_2>0$ such that
$C_1\epsilon^{s_0}\leq\mu(B(x,\epsilon))\leq C_2\epsilon^{s_0}$ for all $\epsilon\in(0,\epsilon_0)$ and $x\in{\rm supp}(\mu)$.
For $n\geq 1$, let $\alpha_n$ be an $n$-optimal set for $\mu$ of order $r$ and $(P_a(\alpha_n))_{a\in\alpha_n}$ an arbitrary Voronoi partition with respect to $\alpha_n$. The $n$th quantization error $e_{n,r}(\mu)$ for $\mu$ of order $r$ is given by $e^r_{n,r}(\mu):=\int d(x,\alpha_n)^rd\mu(x)$. Write
$I_a(\alpha,\mu):=\int_{P_a(\alpha_n)}d(x,\alpha_n)^rd\mu(x),\;a\in\alpha_n$.
We prove that,  all the following three quantities
\[
\underline{J}(\alpha_n,\mu):=\min_{a\in\alpha_n}I_a(\alpha,\mu),\; \overline{J}(\alpha_n,\mu):=\max_{a\in\alpha_n}I_a(\alpha,\mu),\;e^r_{n,r}(\mu)-e^r_{n+1,r}(\mu)
 \]
 are of the same order as $\frac{1}{n}e^r_{n,r}(\mu)$. Thus, for Ahlfors-David probability measure on $\mathbb{R}^q$, our result shows that a weaker version of Gersho's conjecture holds.
\end{abstract}

\maketitle

\section{Introduction}
The quantization problem for probability measures has been studied intensively in the past years (cf. \cite{GL:02,GL:04,GL:05,GL:08,GL:12,PK:01}). One of the main aims of this problem is to study the error in the approximation of a given probability measure with discrete
probability measures of finite support, in terms of
$L_r$-metrics. We refer to \cite{GL:00} for rigorous mathematical foundations of quantization theory and \cite{GPM:02,PG:97} for promising applications of this theory. One may see \cite{BW:82,GN:98,Za:63} for the deep background of the quantization problem in information
theory and engineering technology.

Let $|\cdot|$ be a norm on $\mathbb{R}^q$ and $d$ the metric on $\mathbb{R}^q$ induced by this norm.
For each $n\in\mathbb{N}$, we write $\mathcal{D}_{n}:=\{\alpha\subset\mathbb{R}^{q}:1\leq{\rm card}(\alpha)\leq n\}$.
For a Borel probability measure $\nu$ on $\mathbb{R}^{q}$, the
$n$th quantization error for $\nu$ of order $r\in(0,\infty)$ is defined
by
\begin{eqnarray}\label{quanerror}
e_{n,r}(\nu):=\bigg(\inf_{\alpha\in\mathcal{D}_{n}}\int d(x,\alpha)^{r}d\nu(x)\bigg)^{1/r}.
\end{eqnarray}
By \cite{GL:00}, $e_{n,r}(\nu)$ equals the minimum
error in the approximation of $\nu$ by discrete probability measures
supported on at most $n$ points, in the sense of the $L_r$-metrics. If the infimum in (\ref{quanerror}) is attained at some
$\alpha\in\mathcal{D}_n$,  we call $\alpha$ an $n$-optimal set for $\nu$ of order $r$. The set of
all $n$-optimal sets for $\nu$ of order $r$ is denoted by
$C_{n,r}(\nu)$. By Theorem 4.12 of \cite{GL:00}, $C_{n,r}(\nu)$ is non-empty provided that $\int |x|^rd\nu(x)<\infty$.

Let $\alpha\subset\mathbb{R}^q$ be a finite set. A Voronoi
partition with respect to $\alpha$ means a
partition $\{P_a(\alpha):a\in\alpha\}$ of $\mathbb{R}^q$
satisfying
\begin{eqnarray*}
\{x\in\mathbb{R}^q:\;{\rm d}(x,a)<{\rm
d}\big(x,\alpha\setminus\{a\}\big)\big\}\subset P_a(\alpha)\subset
\big\{x\in\mathbb{R}^q:\;{\rm d}(x,a)={\rm d}(x,\alpha)\big\}.
\end{eqnarray*}
For the above $\alpha\subset\mathbb{R}^q$ and a Borel probability measure $\nu$, we write
\begin{equation}\label{integral}
I(\alpha,\nu):=\int d(x,\alpha)^rd\nu(x),\;\;I_a(\alpha,\nu):=\int_{P_a(\alpha)}d(x,\alpha)^rd\nu(x),\;a\in\alpha.
\end{equation}

In 1979, Gersho conjectured \cite{Ger:79} that, for an absolutely continuous probability measure $\nu$, elements of a Voronoi partition with respect to an $n$-optimal set asymptotically make equal contributions to the quantization error, namely,
\[
I_a(\alpha_n,\nu)\thicksim\frac{1}{n}e_{n,r}^r(\nu), \;a\in\alpha_n\in C_{n,r}(\nu).
\]
where, for two sequences $(a_n)_{n=1}^\infty,(b_n)_{n=1}^\infty$ of positive numbers, $a_n\thicksim b_n$ means that $a_n/b_n\to 1\;(n\to\infty)$. One may see \cite{GL:12} for some heuristic interpretations. This conjecture has been proved only for a certain class of one-dimensional absolutely continuous probability measures (cf. \cite{FP:2002}). In \cite{GL:12}, Graf, Luschgy and Pag\`{e}s proved the following weaker version of Gersho's conjecture for a large class of absolutely continuous probability measures $P$ (including some with unbounded support):
\begin{equation}\label{weaker}
I_a(\alpha_n,P)\asymp\frac{1}{n}e_{n,r}^r(P), \;a\in\alpha_n\in C_{n,r}(P),
\end{equation}
where $a_n\asymp b_n$ means that there exists a constant $C$ which is independent of $n$, such that $Cb_n\leq a_n\leq C^{-1}b_n$ for all $n\geq 1$.

Gersho's conjecture reflects a kind of asymptotic local uniformity of the quantization error with respect to $n$-optimal sets. It is certainly also significant for singular probability measures. In \cite{Zhu:09}, the author showed (\ref{weaker}) for self-similar measures $\mu$ on $\mathbb{R}^q$ with the assumption of the strong separation condition (SSC). Recall that the self-similar set associated with a family $(f_i)_{i=1}^M$ of contractive similitudes
on $\mathbb{R}^q$ refers to the unique non-empty compact set $E$ satisfying $E=\bigcup_{i=1}^Mf_i(E)$ and the self-similar measure associated with $(f_i)_{i=1}^M$ and a given probability vector $(p_i)_{i=1}^M$ refers to the unique Borel probability measure satisfying $\nu=\sum_{i=1}^Mp_i\nu\circ f_i^{-1}$. We say that $(f_i)_{i=1}^M$ satisfies the SSC if $f_i(E),1\leq i\leq M$, are pairwise disjoint.
As general probability measures do not have particular geometric structure like self-similar sets, and one can hardly assume any separation condition for their support, it is even very difficult to examine for what probability measures the weaker version (\ref{weaker}) of Gersho's conjecture holds.

In the present paper, we will prove (\ref{weaker}) for Ahlfors-David probability measures on $\mathbb{R}^q$. Recall that a Borel probability measure $\mu$ on $\mathbb{R}^q$ is said to be $s_0$-dimensional Ahlfors-David regular if there exist constants $C_1,C_2,\epsilon_0>0$, such that
\begin{equation}\label{dameasure}
 C_1\epsilon^{s_0}\leq\mu(B(x,\epsilon))\leq C_2\epsilon^{s_0},\;{\rm for\;all}\;\;\epsilon\in(0,\epsilon_0),\;x\in{\rm supp}(\mu),
\end{equation}
where $B(x,\epsilon):=\{y\in\mathbb{R}^q: d(y,x)\leq\epsilon\}$. We denote by $K$ the support of $\mu$. Then by (\ref{dameasure}), $K$ is clearly compact. When $s_0\notin\mathbb{N}$, the measure $\mu$ is singular with respect to Lebesgue measure. By \cite[Lemma 12.3]{GL:00}, for $C':=2^{s_0}\max\{C_2,\epsilon_0^{-s_0}\}$, we have
\begin{equation}\label{s5}
\sup_{x\in\mathbb{R}^q}B(x,\epsilon)\leq C'\epsilon^{s_0}\;\mbox{for\;all}\;\;\epsilon\in(0,\infty).
\end{equation}
For simplicity, we will assume that (\ref{s5}) holds with $C_2$ in place of $C'$.

Next, we recall some known results by Graf and Luschgy regarding the quantization for Ahlfors-David measures. For this, we need some more definitions.

For a Borel probability measure $\nu$, the $s$-dimensional upper quantization coefficient $\overline{Q}_r^{s}(\nu)$ for $\nu$ of order $r$ and the lower one $\underline{Q}_r^{s}(\nu)$ are defined by
\[
\overline{Q}_r^{s}(\nu):=\limsup_{n\to\infty}n^{\frac{r}{s}}e^r_{n,r}(\nu),\;
\underline{Q}_r^{s}(\nu):=\liminf_{n\to\infty}n^{\frac{r}{s}}e^r_{n,r}(\nu),\;s\in(0,\infty).
\]
The upper (lower) quantization dimension $\overline{D}_r(\nu)$ ($\underline{D}_r(\nu)$) is exactly the critical point at which the upper (lower) quantization coefficient jumps from infinity to zero. According to \cite[Proposition 11.3]{GL:00} and \cite{PK:01}, we have
\[
\overline{D}_r(\nu)=\limsup_{n\to\infty}\frac{\log n}{-\log e_{n,r}(\nu)},\;\underline{D}_r(\nu)=\liminf_{n\to\infty}\frac{\log n}{-\log e_{n,r}(\nu)}.
\]
The upper and lower quantization coefficient and the upper and lower quantization dimension are natural characterizations for the asymptotic properties of the quantization error. We refer to \cite{GL:00,GL:02,GL:05,KZ:16,Kr:08,LM:02,MiRk:15,PK:01,Zhu:16} for some related results in this direction.

By \cite[Theorem 12.18]{GL:00}, for the measures $\mu$ satisfying (\ref{dameasure}), we have
\begin{equation}\label{gl}
0<\underline{Q}_r^{s_0}(\mu)\leq\overline{Q}_r^{s_0}(\mu)<\infty;\;\;\mbox{implying}\;\;e^r_{n,r}(\mu)\asymp n^{-\frac{r}{s_0}}.
\end{equation}

Next, we state our main result of the paper. For a finite $\alpha\subset\mathbb{R}^q$ and a Voronoi partition $\{P_b(\alpha)\}_{b\in\alpha}$ with respect to $\alpha$, we write (cf. (\ref{integral}))
\begin{eqnarray*}
\underline{J}(\alpha,\mu):=\min_{b\in\alpha}I_b(\alpha,\mu);\;\overline{J}(\alpha,\mu):=\max_{b\in\alpha}I_b(\alpha,\mu).
\end{eqnarray*}
\begin{theorem}\label{mthm}
Let $\mu$ be an $s_0$-dimensional Ahlfors-David probability measure on $\mathbb{R}^q$. For every $n$, let $\alpha_n$ be an arbitrary $n$-optimal set for $\mu$ of order $r$ and $\big(P_a(\alpha_n)\big)$ an arbitrary voronoi partition with respect to $\alpha_n$. Then we have
\begin{eqnarray*}
&\underline{J}(\alpha_n,\mu),\overline{J}(\alpha_n,\mu)\asymp\frac{1}{n}e^r_{n,r}(\mu);\;e^r_{n,r}(\mu)-e^r_{n+1,r}(\mu)\asymp \frac{1}{n}e^r_{n,r}(\mu).
\end{eqnarray*}
\end{theorem}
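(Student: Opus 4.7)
The plan is to derive the three comparisons from a priori geometric regularity of $n$-optimal sets $\alpha_n$ under Ahlfors-David regularity, combined with the two-sided estimate $e^r_{n,r}(\mu)\asymp n^{-r/s_0}$ from (\ref{gl}). I would first establish, uniformly in $n$, three regularity properties of $\alpha_n\in C_{n,r}(\mu)$: (i) $\max_{x\in K}d(x,\alpha_n)\le D_1 n^{-1/s_0}$ (covering), (ii) $\max_{a\in\alpha_n}d(a,K)\le D_2 n^{-1/s_0}$ (support proximity), and (iii) $\min_{a\ne b\in\alpha_n}d(a,b)\ge\delta_0 n^{-1/s_0}$ (separation). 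Each is proved by contradiction via a swap: a violation supplies a ``redundant'' point of $\alpha_n$ whose relocation to a point $y\in K$ with $d(y,\alpha_n)\gtrsim n^{-1/s_0}$ (which exists by a packing count against $\mu(K)=1$ and the AD upper bound) strictly reduces the quantization cost; the AD lower bound quantifies the gain and the AD upper bound (\ref{s5}) the loss.

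With (i)--(iii), the upper bound $\overline{J}(\alpha_n,\mu)\lesssim n^{-1}e^r_{n,r}(\mu)$ is immediate: (i) and (ii) give $P_a(\alpha_n)\cap K\subset B(a,Cn^{-1/s_0})$, (\ref{s5}) yields $\mu(P_a(\alpha_n))\lesssim n^{-1}$, hence $I_a(\alpha_n,\mu)\le(Cn^{-1/s_0})^r\mu(P_a(\alpha_n))\lesssim n^{-1-r/s_0}$. The upper bound $e^r_{n,r}(\mu)-e^r_{n+1,r}(\mu)\lesssim n^{-1}e^r_{n,r}(\mu)$ follows from the same estimates applied to an $(n+1)$-optimal set $\alpha_{n+1}$: for any $a\in\alpha_{n+1}$, $e^r_{n,r}(\mu)\le I(\alpha_{n+1}\setminus\{a\},\mu)$, and the resulting excess $\int_{P_a(\alpha_{n+1})}\bigl(d(x,\alpha_{n+1}\setminus\{a\})^r-d(x,a)^r\bigr)d\mu$ is $\lesssim n^{-1-r/s_0}$ via $d(x,\alpha_{n+1}\setminus\{a\})\le d(x,a)+Cn^{-1/s_0}$ (using (iii) together with the cell diameter bound from (i)--(ii)) and $\mu(P_a(\alpha_{n+1}))\lesssim n^{-1}$.

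The lower bounds invoke the AD lower estimate essentially. For $\underline{J}(\alpha_n,\mu)\gtrsim n^{-1}e^r_{n,r}(\mu)$: by (iii), $B(a,\delta_0 n^{-1/s_0}/2)\subset P_a(\alpha_n)$; by (ii), pick $a'\in K$ with $d(a,a')\le D_2 n^{-1/s_0}$; a two-radius comparison of the AD inequalities around $a'$ produces an annulus contained in $P_a(\alpha_n)$ of $\mu$-mass $\asymp n^{-1}$ at distance $\asymp n^{-1/s_0}$ from $a$, forcing $I_a(\alpha_n,\mu)\gtrsim n^{-1-r/s_0}$. For $e^r_{n,r}(\mu)-e^r_{n+1,r}(\mu)\gtrsim n^{-1}e^r_{n,r}(\mu)$ one does not even need (i)--(iii): take $y\in K$ with $d(y,\alpha_n)\gtrsim n^{-1/s_0}$ (as above); then $e^r_{n+1,r}(\mu)\le I(\alpha_n\cup\{y\},\mu)$, and integrating $d(x,\alpha_n)^r-d(x,y)^r$ over a small ball $B(y,cn^{-1/s_0})$ of AD-mass $\asymp n^{-1}$ (on which $d(x,\alpha_n)\asymp n^{-1/s_0}$ and $d(x,y)\le cn^{-1/s_0}$) produces a reduction $\gtrsim n^{-1-r/s_0}\asymp n^{-1}e^r_{n,r}(\mu)$.

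I expect the main obstacle to be the a priori regularity package, particularly the separation bound (iii). The covering (i) and proximity (ii) admit fairly direct swap arguments using (\ref{gl}), but ruling out anomalously close pairs in $\alpha_n$ requires a careful accounting: the loss from removing one member of a close pair scales like $d(a,b)^{\min(r,1)}\cdot n^{-r/s_0}\cdot\mu(P_b(\alpha_n))$, which must be weighed against a gain of order $n^{-1-r/s_0}$ from the relocation in order to produce a universal lower bound on the ratio $d(a,b)\cdot n^{1/s_0}$.
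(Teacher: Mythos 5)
Your outline names the right targets, but the route through the a priori regularity package (i)--(iii) has a circularity at (i), and (i) is exactly where the paper's heavier machinery is unavoidable. A violation $d(x_0,\alpha_n)>Dn^{-1/s_0}$ does not ``supply a redundant point'': inserting a point at $x_0$ gains only $\asymp D^{s_0+r}n^{-1-r/s_0}$ (AD lower bound over a ball of radius $\asymp Dn^{-1/s_0}$, times an $r$th-power distance of that size), while deleting an arbitrary $a\in\alpha_n$ can cost up to $\mu(P_a(\alpha_n))\cdot\mathrm{diam}(K)^r$. Pigeonhole gives a cell with $\mu(P_a(\alpha_n))\le 1/n$, but with no a priori bound on $\sup_{x\in P_a(\alpha_n)}d(x,\alpha_n\setminus\{a\})$ the best one can say for the loss is $\lesssim n^{-1}$, which dominates $n^{-1-r/s_0}$; the swap does not close. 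Alternatively, comparing $I(\alpha_n\cup\{x_0\},\mu)\ge e^r_{n+1,r}(\mu)\gtrsim n^{-r/s_0}$ with $e^r_{n,r}(\mu)\lesssim n^{-r/s_0}$ only yields $D\lesssim n^{1/(s_0+r)}$, not a constant. Your loss estimates for (iii) and for the error-difference upper bound already presuppose the cell-diameter bound coming from (i)--(ii); you flagged (iii) as the obstacle, but (iii) follows fairly easily once (i)--(ii) are in hand, whereas (i) is where the argument needs a genuinely new idea.

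The paper avoids proving a priori regularity of $\alpha_n$ altogether. It fixes a packing of $K$ by balls $E_\sigma$ of radius $m^{-k}\asymp n^{-1/s_0}$ and proves counting bounds relative to that fixed scale: Lemma~\ref{key1} (each $(A_\sigma)_{|A_\sigma|/16}$ contains at least $n_1$ points of $\alpha_n$ --- this is what implies your (i)) and Lemma~\ref{key2} (at most $n_3$ Voronoi cells meet $A_\sigma\cap K$). The proof of Lemma~\ref{key1} is a swap, but a structured one: a deserted $\sigma$ forces an overcrowded $\omega$ with $L_\omega\ge n_2$; one deletes the points near $\omega$, re-covers $(A_\omega)_{|A_\omega|/8}$ with a bounded set $\beta_\omega$ plus a near-optimal codebook for the rescaled conditional measure $\nu_{\omega,3}$, and adds $n_1$ points near $\sigma$. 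The auxiliary measures $\lambda_B=\mu(\cdot|B)\circ h_B$ and the preliminary Lemmas~\ref{pre1}--\ref{pre5} supply the uniform increment $\zeta_{k,r}$ and the threshold choices $n_1,n_2,n_3$ that make the gain dominate the loss. Separately, even granting (i)--(iii), your annulus argument for $\underline{J}(\alpha_n,\mu)\gtrsim n^{-1}e^r_{n,r}(\mu)$ implicitly needs $\delta_0/2>D_2$ so that $B(a,\delta_0 n^{-1/s_0}/2)$ actually meets $K$; the two swap arguments give no reason one constant dominates the other. The paper instead groups the Voronoi cells meeting the nearby $A_\sigma$'s into $\beta(a)$, invokes \cite[Theorem 4.1]{GL:00} to get local optimality for the conditional measure, and applies the uniform lower bound of Lemma~\ref{microapp} at bounded cardinality (Lemma~\ref{pf2}).
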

\begin{remark}
By Theorem \ref{mthm} and (\ref{gl}), we conclude that $\underline{J}(\alpha_n,\mu),\overline{J}(\alpha_n,\mu)$ and the error difference
$e^r_{n,r}(\mu)-e^r_{n+1,r}(\mu)$ are of the same order as $n^{-(1+\frac{r}{s_0})}$.
\end{remark}

As the support $K$ of $\mu$ generally does not have particular geometric structure like self-similar sets, and no separation condition for the support is assumed, we will fix an integer $m\geq 2$ and consider the largest number of pairwise disjoint closed balls of radii $m^{-k}$ which are centered in $K$. The advantage of doing so is that, we may shrink or expand such a closed ball to some suitable size without losing control of the $\mu$-measures. We will make use of some auxiliary measures by pushing forward and pulling back the conditional measures of $\mu$ on suitable neighborhoods of the above-mentioned balls and establish a series of preliminary lemmas regarding the quantization errors. One may see \cite{Zhu:13} for more applications of auxiliary measures of this type. The proof for the main result will rely on the following two aspects.

First, for a given $\alpha_n\in C_{n,r}(\mu)$, we will choose a suitable integer $k$ and establish upper and lower bounds for the number of points of $\alpha_n$ in suitable neighborhoods of those balls of radii $m^{-k}$. These bounds will allow us to show that each element $P_a(\alpha_n)$ of a Voronoi partition $\{P_a(\alpha_n)\}_{a\in\alpha_n}$ intersects at most a bounded number (independent of $n$) of the above-mentioned balls. This enables us to estimate $\overline{J}(\alpha_n,\mu)$ from above.

Secondly, for an arbitrary point $a\in\alpha_n$, we will choose a bounded number of points in $\alpha_n$, and show that the union $U$ of the corresponding elements of $\{P_a(\alpha_n)\}$ contains a neighborhood of one of the above-mentioned closed ball, and $U\cap K$ is contained in a bounded number (independent of $n$) of neighborhoods of such closed balls. This, together with \cite[Theorem 4.1]{GL:00} and our preliminary results will enable us to establish a lower estimate for $\underline{J}(\alpha_n,\mu)$.

\section{Preliminary lemmas}

In the remaining part of the paper, we denote by $\mu$ the Ahlfors-David measure satisfying (\ref{dameasure}) and $K$ the support of $\mu$. Let $m\geq 2$ be a fixed integer. In this section, we will establish some preliminary lemmas, some of which will be given in a more general context and stated in terms of $\nu$. We set
\[
k_0:=\min\{k: 2m^{-k}<\epsilon_0\}.
 \]

For every $k\geq k_0$, we denote by $\phi_k$ the largest number of pairwise disjoint closed balls of radii $m^{-k}$ which are centered in $K$. We choose such $\phi_k$ closed balls and denote them by $E_\sigma,\sigma\in\Omega_k$, where
\begin{equation*}
\Omega_k:=\{(k,1),(k,2),\ldots,(k,\phi_k)\}.
\end{equation*}
For every $\sigma\in\Omega_k$, let $c_\sigma$ denote the center of $E_\sigma$. We write
\begin{equation}\label{cylinder}
A_\sigma:=B(c_\sigma,|E_\sigma|)=B(c_\sigma,2 m^{-k}),\;\;D_\sigma:=B\big(c_\sigma,\frac{7}{16}|E_\sigma|\big)=B\big(c_\sigma,\frac{7}{8} m^{-k}\big);
\end{equation}
where $|A|$ denotes the diameter of a set $A\subset\mathbb{R}^q$. We have the following simple fact:
\begin{lemma}\label{first}
There exists a constant $N$ such that $\phi_k\leq\phi_{k+1}\leq N\phi_k$ for all $k\geq k_0$.
\end{lemma}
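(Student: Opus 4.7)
My plan is to prove the two inequalities separately. For the lower bound $\phi_k \le \phi_{k+1}$, I would simply take the $\phi_k$ centers $c_\sigma$ (which lie in $K$) and shrink the radii: since $m \ge 2$, the balls $B(c_\sigma, m^{-(k+1)}) \subset B(c_\sigma, m^{-k})$ are still pairwise disjoint and centered in $K$. By maximality of $\phi_{k+1}$, this forces $\phi_{k+1} \ge \phi_k$.

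For the upper bound $\phi_{k+1} \le N \phi_k$, the key is the maximality at scale $k$, which gives a covering property: if any $x \in K$ had $d(x, c_\sigma) > 2m^{-k}$ for all $\sigma \in \Omega_k$, then $B(x, m^{-k})$ would be disjoint from every $B(c_\sigma, m^{-k})$, contradicting maximality. Hence $K \subset \bigcup_{\sigma \in \Omega_k} B(c_\sigma, 2m^{-k}) = \bigcup_{\sigma \in \Omega_k} A_\sigma$. Since the $\phi_{k+1}$ centers at scale $k+1$ lie in $K$, each such center lies in some $A_\sigma$, so it suffices to bound (uniformly in $\sigma$) the number of centers of pairwise disjoint $m^{-(k+1)}$-balls that can sit inside $A_\sigma$.

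This is where I would use the Ahlfors-David regularity. If $c_{\tau_1}, \ldots, c_{\tau_p}$ are centers of disjoint balls at scale $k+1$ that all lie in $A_\sigma$, then the disjoint union $\bigsqcup_{i=1}^p B(c_{\tau_i}, m^{-(k+1)})$ is contained in $B(c_\sigma, 2m^{-k} + m^{-(k+1)}) \subset B(c_\sigma, 3m^{-k})$. Applying (\ref{s5}) (with $C_2$ in place of $C'$) to the enclosing ball and the lower bound in (\ref{dameasure}) to each small ball (noting $m^{-(k+1)} < \epsilon_0$ since $k \ge k_0$), we obtain
\[
p \cdot C_1 m^{-s_0(k+1)} \le \sum_{i=1}^p \mu\bigl(B(c_{\tau_i}, m^{-(k+1)})\bigr) \le \mu\bigl(B(c_\sigma, 3m^{-k})\bigr) \le C_2 (3m^{-k})^{s_0},
\]
which gives $p \le N_0 := C_1^{-1} C_2 \, 3^{s_0} m^{s_0}$, independent of $k$ and $\sigma$. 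Summing over $\sigma \in \Omega_k$ yields $\phi_{k+1} \le N_0 \phi_k$, so $N = N_0$ works.

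I do not expect any serious obstacle here; the lemma is a standard doubling-type consequence of Ahlfors-David regularity. The only minor care needed is verifying that the radii appearing ($2m^{-k}$, $3m^{-k}$, $m^{-(k+1)}$) are either within the range where (\ref{dameasure}) applies, or else covered by the global bound (\ref{s5}).
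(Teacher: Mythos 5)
Your proof is correct. For the monotonicity $\phi_k \le \phi_{k+1}$, shrinking the $\phi_k$ balls of radius $m^{-k}$ to radius $m^{-(k+1)}$ is the right and complete argument (the paper states this inequality but its displayed estimate (\ref{phikestimate}) alone does not quite force it, so your argument is in fact the cleaner one). For the upper bound, the paper takes a slightly different and more economical route: it uses the covering $K\subset\bigcup_{\sigma\in\Omega_k}A_\sigma$ together with disjointness of the $E_\sigma$ to derive the two-sided estimate $C_2^{-1}2^{-s_0}m^{ks_0}\le\phi_k\le C_1^{-1}m^{ks_0}$ for every $k$, and then simply compares the resulting bounds at scales $k$ and $k+1$. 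You instead localize: cover $K$ by the $A_\sigma$, observe that each center at scale $k+1$ falls in some $A_\sigma$, and bound the number of disjoint $m^{-(k+1)}$-balls packed into $B(c_\sigma,3m^{-k})$ by Ahlfors--David regularity. Both are standard doubling arguments and both are valid; the paper's version has the advantage of producing the intermediate asymptotic $\phi_k\asymp m^{ks_0}$ as a byproduct, which is reused later in (\ref{onenth}), whereas your local packing argument gives the ratio bound directly without that extra information. Your attention to the radius ranges (checking $m^{-(k+1)}<\epsilon_0$ for the lower Ahlfors bound and invoking the globally valid upper bound (\ref{s5}) for the $3m^{-k}$ ball) is exactly the care required.
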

\begin{proof}
By the definition, we have $K\subset\bigcup_{\sigma\in\Omega_k}A_\sigma$. Thus, by (\ref{dameasure}), we deduce
\begin{eqnarray*}
\phi_k C_1m^{-ks_0}\leq 1\leq \phi_k C_22^{s_0}m^{-ks_0}.
\end{eqnarray*}
It follows that
\begin{equation}\label{phikestimate}
C_2^{-1}2^{-s_0}m^{ks_0}\leq \phi_k\leq C_1^{-1}m^{ks_0}.
\end{equation}
Hence, we have $\phi_k\leq\phi_{k+1}\leq C_1^{-1}C_22^{s_0}m\phi_k$.
It suffices to set $N:=C_1^{-1}C_22^{s_0}m$.
\end{proof}

As a consequence of (\ref{gl}) and (\ref{phikestimate}), for $n\asymp\phi_k$, we have
\begin{eqnarray}\label{onenth}
\frac{1}{n}e^r_{n,r}(\mu)\asymp n^{-(1+\frac{r}{s_0})}\asymp\phi_k^{-(1+\frac{r}{s_0})}\asymp \big(m^{ks_0}\big)^{-(1+\frac{r}{s_0})}=m^{-k(s_0+r)}.
\end{eqnarray}

The subsequent three lemmas are given in a more general context.
\begin{lemma}\label{pre1}
Let $\eta>0$. There exists an integer $M(\eta)$, such that for every Borel probability measure $\nu$ on $\mathbb{R}^q$ with compact support $K_\nu$, we have
\[
\sup_{n\geq M(\eta)-1}e^r_{n,r}(\nu)\leq(\eta|K_\nu|)^r.
\]
\end{lemma}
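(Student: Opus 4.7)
The plan is a direct covering argument which makes no use of $\nu$ beyond the compactness of $K_\nu$. I would observe first that it suffices to exhibit, for each $\eta>0$, an integer $N=N(\eta)$ independent of $\nu$ and a finite set $\alpha_\nu\subset\mathbb{R}^q$ with $\mathrm{card}(\alpha_\nu)\leq N$ such that
\[
d(x,\alpha_\nu)\leq \eta\,|K_\nu|\quad \text{for every } x\in K_\nu.
\]
Once such an $\alpha_\nu$ is produced, $\alpha_\nu\in\mathcal{D}_N$ and
\[
e^r_{N,r}(\nu)\leq \int_{K_\nu} d(x,\alpha_\nu)^r\,d\nu(x)\leq (\eta\,|K_\nu|)^r,
\]
and since $n\mapsto e_{n,r}(\nu)$ is non-increasing the inequality propagates to every $n\geq N$. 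Setting $M(\eta):=N+1$ then yields the desired conclusion in the exact form stated.

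The construction of $\alpha_\nu$ reduces to the following purely metric covering statement: there exists an integer $N(\eta)$, depending only on $\eta$ and on the norm $|\cdot|$ on $\mathbb{R}^q$, such that every closed ball of radius $R>0$ in $(\mathbb{R}^q,d)$ can be covered by $N(\eta)$ closed balls of radius $\eta R$. This is standard: the closed unit ball $B(0,1)$ is compact in $(\mathbb{R}^q,d)$, hence is covered by finitely many balls of radius $\eta$, and the number required is invariant under the dilation $x\mapsto Rx$ because $d$ comes from a norm. Pick any $x_0\in K_\nu$; then $K_\nu\subset B(x_0,|K_\nu|)$, and the covering lemma produces $N(\eta)$ balls of radius $\eta|K_\nu|$ that cover $K_\nu$. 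Taking $\alpha_\nu$ to be the set of their centers does the job.

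The main (and really only) point to be careful about is that $N(\eta)$ must be chosen before $\nu$, so that the same integer works uniformly over all Borel probability measures with compact support. The scaling step above is exactly what secures this uniformity; there is no serious obstacle beyond it.
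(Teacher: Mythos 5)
Your proof is correct and takes essentially the same approach as the paper: cover $K_\nu$ by a uniformly bounded number of closed balls of radius $\eta|K_\nu|$, use their centers as a quantizer, and invoke the monotonicity of $n\mapsto e_{n,r}(\nu)$. The only difference is in how the covering number is bounded independently of $\nu$: the paper packs $K_\nu$ with disjoint balls of radius $\tfrac{1}{2}\eta|K_\nu|$ centered in $K_\nu$, doubles them, and compares volumes to obtain the explicit value $M(\eta)=[(2\eta^{-1}+4)^q]+1$, whereas you enclose $K_\nu$ in a single ball $B(x_0,|K_\nu|)$ and use compactness of the unit ball plus a dilation to get the same uniformity --- shorter, though less explicit; since the paper later uses $M(\eta)$ only as some fixed integer (in defining $n_1:=M(\eta_0)$), nothing is lost either way.
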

\begin{proof}
Let $N_\eta(K_\nu)$ denote the largest number of pairwise disjoint closed balls of radii $\frac{1}{2}|K_\nu|\eta$, which are centered in $K_\nu$. Then we double the radii of these balls and get a cover for $K_\nu$. By estimating the volumes, we have
\[
N_\eta(K_\nu)(2^{-1}\eta|K_\nu|)^q\leq((1+2\eta)|K_\nu|)^q.
\]
This implies that $N_\eta(K_\nu)\leq(2\eta^{-1}+4)^q$. Let $[x]:=\max\{k\in\mathbb{Z}:k\leq x\}$. We set
\[
M(\eta):=[(2\eta^{-1}+4)^q]+1.
\]
Then $K_\nu$ can be covered by $M(\eta)-1$ closed balls of radius $\eta|K_\nu|$. We denote by $\beta$ the centers of such $M(\eta)-1$ closed balls. It follows that
\[
\sup_{n\geq M(\eta)-1}e^r_{n,r}(\nu)\leq e^r_{ M(\eta)-1,r}(\nu)\leq\int d(x,\beta)^rd\nu(x)\leq (\eta|K_\nu|)^r.
\]
This completes the proof of the lemma.
\end{proof}

With the following lemma, we give an upper estimate for the error difference $e_{k-1,r}^r(\nu)-e_{k,r}^r(\nu)$, provided that $\nu$ satisfies a certain local property.

\begin{lemma}\label{pre2}
Let $\nu$ be a Borel probability measure on $\mathbb{R}^q$ with compact support $K_\nu$ such that
$\sup_{x\in \mathbb{R}^q}\nu(B(x,\epsilon))\leq C\epsilon^{t}$ for every $\epsilon\in(0,\infty)$. Assume that $|K_\nu|\leq1$.
Then for each $k\geq 2$, there exists a real number $\zeta_{k,r}$, which depends on $C, t$ and $k$, such that
\[
e_{k-1,r}^r(\nu)-e_{k,r}^r(\nu)\geq \zeta_{k,r}.
\]
\end{lemma}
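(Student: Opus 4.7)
The plan is to exhibit, for an optimal $(k-1)$-set $\alpha_{k-1}$, a single extra point $y\in K_\nu$ whose addition to $\alpha_{k-1}$ already lowers the integrated error by an amount depending only on $C$, $t$ and $k$. Since
\[
e_{k-1,r}^r(\nu)-e_{k,r}^r(\nu)\geq\int\bigl(d(x,\alpha_{k-1})^r-d(x,\alpha_{k-1}\cup\{y\})^r\bigr)\,d\nu(x),
\]
it suffices to produce one good $y$ and to bound this integral from below.

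To locate $y$, I would first use the upper mass hypothesis to carve out a macroscopic gap from $\alpha_{k-1}$. Setting $\delta:=\bigl((k-1)C\bigr)^{-1/t}$, the union $\bigcup_{a\in\alpha_{k-1}}B(a,\delta/2)$ has $\nu$-mass at most $(k-1)C(\delta/2)^{t}=2^{-t}$, so the set $F:=\{x\in K_\nu:d(x,\alpha_{k-1})>\delta/2\}$ satisfies $\nu(F)\geq 1-2^{-t}>0$. Since $|F|\leq|K_\nu|\leq 1$, any maximal $(\delta/8)$-separated subset $\{y_1,\dots,y_N\}\subset F$ has $N$ bounded by a function of $\delta$ and $q$ alone (a Lebesgue-volume argument applied to the disjoint balls $B(y_i,\delta/16)$). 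These balls of radius $\delta/8$ then cover $F$, so a pigeonhole step selects some $y=y_i$ with $\nu(B(y,\delta/8))\geq(1-2^{-t})/N$.

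The improvement is then essentially pointwise: for $x\in B(y,\delta/8)$ one has $d(x,y)\leq\delta/8$ and $d(x,\alpha_{k-1})\geq d(y,\alpha_{k-1})-\delta/8\geq 3\delta/8$, whence
\[
d(x,\alpha_{k-1})^r-d(x,\alpha_{k-1}\cup\{y\})^r\geq\delta^r\bigl((3/8)^r-(1/8)^r\bigr)>0.
\]
Integrating against $\nu$ over $B(y,\delta/8)$ gives the required bound $\zeta_{k,r}$, explicit as a positive function of $C$, $t$, $k$ (and the fixed $r$, $q$).

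The main difficulty, as I see it, is that the hypothesis provides only an \emph{upper} bound on ball masses, whereas the argument requires \emph{both} a macroscopic gap around $\alpha_{k-1}$ \emph{and} a microscopic lower bound on $\nu$-mass near $y$. The gap drops out of the mass bound directly, but the lower mass bound near $y$ must be extracted via the pigeonhole/covering step, which relies crucially on the facts that $\nu$ is a probability measure (forcing $\nu(F)$ to be bounded below) and that $|K_\nu|\leq 1$ (making the covering number $N$ uniform in $\nu$).
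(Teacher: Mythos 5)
Your argument is correct and follows essentially the same route as the paper: use the upper mass hypothesis to show that the set of points at distance $>\delta/2$ from $\alpha_{k-1}$ has $\nu$-mass bounded below, cover that set by a uniformly bounded number of small balls (via the volume packing estimate using $|K_\nu|\leq 1$), pigeonhole to find one ball with definite mass, and add its center to $\alpha_{k-1}$ to get a quantitative improvement $\zeta_{k,r}$. The paper's proof differs only in bookkeeping: it introduces two nested radii $\delta_{k,1}=(4(k-1)C)^{-1/t}$ and $\delta_{k,2}=(2(k-1)C)^{-1/t}$ so that the residual set has mass $\geq 1/2$ independently of $t$, whereas your single-$\delta$ choice gives mass $\geq 1-2^{-t}$, which is still strictly positive and perfectly adequate.
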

\begin{proof}
Let $\alpha_{k-1}=\{a_i\}_{i=1}^{k-1}\in C_{k-1,r}(\nu)$. Set
\[
\delta_{k,1}:=(4(k-1)C)^{-\frac{1}{t}},\;\delta_{k,2}:=(2(k-1)C)^{-\frac{1}{t}}.
\]
Then for every $1\leq i\leq k-1$, we have
\begin{eqnarray}\label{s4}
\nu(B(a_i,\delta_{k,1}))\leq C\delta_{k,1}^t=\frac{1}{4(k-1)};\;\nu(B(a,\delta_{k,2}))\leq C\delta_{k,2}^t=\frac{1}{2(k-1)}.
\end{eqnarray}
By estimating the volumes, we may find an integer $l_k$ which depends on $k$ and $C$, such that $K_\nu\setminus \bigcup_{i=1}^{k-1}B(a_i,\delta_{k,2})$ can be covered by $l_k$ closed balls $B_i,1\leq i\leq l_k$, of radii
$\delta:=2^{-1}\min\{\delta_{k,2}-\delta_{k,1},\delta_{k,1}\}$
which are centered in $K_\nu\setminus \bigcup_{i=1}^{k-1}B(a_i,\delta_{k,2})$. For each $1\leq i\leq l_k$, we denote by $b_i$ the center of $B_i$. By (\ref{s4}), we have
\[
\nu\bigg(K_\nu\setminus \bigcup_{i=1}^{k-1}B(a_i,\delta_{k,2})\bigg)\geq\frac{1}{2}.
\]
Hence, there exists some $B_i$ with $\nu(B_i)\geq(2l_k)^{-1}$. Set $\beta_k:=\alpha_{k-1}\cup\{b_i\}$.
Then
\begin{eqnarray}\label{pre2a}
e_{k-1,r}^r(\nu)-e_{k,r}^r(\nu)&\geq&I(\alpha_{k-1},\nu)-I(\beta,\nu)\nonumber\\&\geq&\int_{B_i}d(x,\alpha_{k-1})^rd\nu(x)-\int_{B_i}d(x,b_i)^rd\nu(x).
\end{eqnarray}
Note that $B_i$ does not intersect any one of the balls $B(a_i,\delta_{k,1}),1\leq i\leq k-1$. Thus,
\[
\inf_{x\in B_i}d(x,\alpha_{k-1})\geq \delta_{k,1},\;\;\sup_{x\in B_i}d(x,b_i)\leq \frac{1}{2}\delta_{k,1}.
\]
This, together with (\ref{pre2a}), yields
\begin{eqnarray*}
e_{k-1,r}^r(\nu)-e_{k,r}^r(\nu)\geq\nu(B_i)(\delta_{k,1}^r-2^{-r}\delta_{k,1}^r)\geq \frac{1}{2l_k}(1-2^{-r})\delta_{k,1}^r.
\end{eqnarray*}
The lemma follows by setting $\zeta_{k,r}:=\frac{1}{2l_k}(1-2^{-r})\delta_{k,1}^r$.
\end{proof}

Our next lemma is based on some results in \cite{GL:00} and \cite{GL:04}. This lemma will be used in the proof of the main result.
\begin{lemma}\label{microapp}
Let $\nu$ satisfy the assumption in Lemma \ref{pre2}. Then for every $n\geq 1$, there exists a number $d_n>0$ which depends on $n$ and $C$, such that
 \begin{eqnarray*}
 \inf_{\alpha_n\in C_{n,r}(\nu)}\underline{J}(\alpha_n,\nu)>d_n.
 \end{eqnarray*}
 \end{lemma}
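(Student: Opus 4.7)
The plan is to argue by contradiction, combining the positive lower bound on $e_{n-1,r}^r(\nu)-e_{n,r}^r(\nu)$ from Lemma~\ref{pre2} with a Markov-type estimate that prevents $I_{a_0}(\alpha_n,\nu)$ from being arbitrarily small. Fix $\alpha_n\in C_{n,r}(\nu)$ and let $a_0\in\alpha_n$ attain $\underline{J}(\alpha_n,\nu)=I_{a_0}(\alpha_n,\nu)$. The case $n=1$ is immediate since $\underline{J}(\alpha_1,\nu)=e_{1,r}^r(\nu)$, which is strictly positive because the hypothesis $\nu(B(x,\epsilon))\leq C\epsilon^t$ forces $\nu$ to be atomless. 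So assume $n\geq 2$, and show that if $I_{a_0}(\alpha_n,\nu)$ is very small, then deleting $a_0$ produces an $(n-1)$-point set whose error is too close to $e_{n,r}^r(\nu)$ to be compatible with Lemma~\ref{pre2}.

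Decomposing against the Voronoi cell $P_{a_0}=P_{a_0}(\alpha_n)$ yields the identity
\begin{align*}
I(\alpha_n\setminus\{a_0\},\nu) = e_{n,r}^r(\nu) - I_{a_0}(\alpha_n,\nu) + \int_{P_{a_0}} d(x,\alpha_n\setminus\{a_0\})^r\,d\nu(x).
\end{align*}
Since optimal centers lie in $\mathrm{conv}(K_\nu)$ (Theorem 4.1 of \cite{GL:00}) and $|K_\nu|\leq 1$, we have $d(x,\alpha_n\setminus\{a_0\})\leq 1$ for $\nu$-a.e.\ $x\in K_\nu$, so the last integral is at most $\nu(P_{a_0})$. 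To relate $\nu(P_{a_0})$ to $I_{a_0}(\alpha_n,\nu)$, for any $\rho>0$ split on $B(a_0,\rho)$ and use the hypothesis on $\nu$ together with Markov's inequality:
\begin{align*}
\nu(P_{a_0}) \leq \nu(B(a_0,\rho)) + \rho^{-r}\!\int_{P_{a_0}} d(x,a_0)^r\,d\nu(x) \leq C\rho^t + \rho^{-r} I_{a_0}(\alpha_n,\nu).
\end{align*}
Choosing $\rho = I_{a_0}(\alpha_n,\nu)^{1/(t+r)}$ balances the two terms and gives $\nu(P_{a_0})\leq (C+1)\,I_{a_0}(\alpha_n,\nu)^{t/(t+r)}$.

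Since $\alpha_n\setminus\{a_0\}\in\mathcal{D}_{n-1}$, optimality gives $e_{n-1,r}^r(\nu)\leq I(\alpha_n\setminus\{a_0\},\nu)$, and combining the estimates above yields
\begin{align*}
e_{n-1,r}^r(\nu) - e_{n,r}^r(\nu) \leq (C+1)\,I_{a_0}(\alpha_n,\nu)^{t/(t+r)}.
\end{align*}
Lemma~\ref{pre2} bounds the left side below by $\zeta_{n,r}>0$, so $I_{a_0}(\alpha_n,\nu)\geq \big(\zeta_{n,r}/(C+1)\big)^{(t+r)/t}=:d_n>0$ uniformly in $\alpha_n\in C_{n,r}(\nu)$, and $d_n$ depends only on $n$ and $C$ (through $\zeta_{n,r}$ and the fixed $t,r$). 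The main technical step is the inversion that converts smallness of the $r$-th moment $I_{a_0}(\alpha_n,\nu)$ into smallness of $\nu(P_{a_0})$; this is possible precisely because the upper local bound $\nu(B(x,\epsilon))\leq C\epsilon^t$ prevents $\nu$ from concentrating near $a_0$. The complementary fact that $d(x,\alpha_n\setminus\{a_0\})$ is bounded by $|K_\nu|$ rests on the inclusion of optimal codebooks in $\mathrm{conv}(K_\nu)$, which is the \cite{GL:00} ingredient referenced in the excerpt.
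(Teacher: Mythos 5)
Your argument is essentially the same as the paper's: both hinge on Lemma~\ref{pre2} via the trick of removing one point from $\alpha_n$ and comparing the resulting $(n-1)$-point error to $e^r_{n,r}(\nu)$, which pins down $\nu(P_{a}(\alpha_n))$ against $\zeta_{n,r}$; and both then convert between $\nu(P_a)$ and $I_a(\alpha_n,\nu)$. Where you differ is in that conversion step: the paper invokes the ``median radius'' $\delta_B$ construction from Proposition~12.12 and (12.13)--(12.14) of \cite{GL:00} to get $I_a \gtrsim \nu(P_a)^{1+r/t}$, whereas you prove the equivalent inequality $\nu(P_a) \leq (C+1)\,I_a^{t/(t+r)}$ directly by splitting on a ball $B(a,\rho)$ and applying Markov with the optimal choice $\rho = I_a^{1/(t+r)}$. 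Your Markov version is cleaner and self-contained, and your use of Theorem~4.1 of \cite{GL:00} to get $d(x,\alpha_n\setminus\{a_0\})\leq |K_\nu|\leq 1$ is tighter than the paper's bound of $3|K_\nu|$, though the difference in constants is immaterial.

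The one genuine gap is the case $n=1$. You observe that $e^r_{1,r}(\nu)>0$ because the upper regularity of $\nu$ forces it to be atomless, but this is a pointwise-in-$\nu$ statement, not a bound of the required form. The lemma asserts a lower bound $d_1>0$ that depends only on $n$ and $C$ (and the fixed $t,r$), uniformly over all $\nu$ satisfying the hypotheses; this is essential because in the proof of Theorem~\ref{mthm} the lemma is applied to a family of conditional measures $\lambda_{\widetilde G_a}$ with a common $C$ but varying in every other respect. Your argument gives no such uniform bound. The fix is elementary and quantitative: as in the paper, pick $\epsilon_1 := 2^{-1}C^{-1/t}$ so that $\nu(B(a,\epsilon_1))\leq C\epsilon_1^t = 2^{-t}<1$, whence
\begin{equation*}
e^r_{1,r}(\nu)=\int d(x,a)^r\,d\nu(x)\geq \nu\bigl(\mathbb{R}^q\setminus B(a,\epsilon_1)\bigr)\,\epsilon_1^r\geq (1-2^{-t})\epsilon_1^r,
\end{equation*}
which may be taken as $d_1$. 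With that one-line repair, your proof is complete and correct.
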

 \begin{proof}
 Let $\alpha_n\in C_{n,r}(\nu)$. We write $\{P_a(\alpha_n)\}_{a\in\alpha_n}$ for
 an arbitrary Voronoi partition with respect to $\alpha_n$. By \cite[Theorem 4.11]{GL:00}, we have
 \[
 {\rm card}(\alpha_n)=n,\;\; {\rm and}\;\;\min_{a\in\alpha_n}\nu(P_a(\alpha_n))>0.
  \]
  Set $\epsilon_1:=2^{-1}C^{-1/t}$ and $\{a\}=\alpha_1\in C_{1,r}(\nu)$. We have
 \begin{eqnarray*}
 \int d(x,a)^rd\nu(x)\geq\int_{\mathbb{R}^q\setminus B(a,\epsilon_1)}d(x,a)^rd\nu(x)\geq (1-2^{-t})\epsilon_1^r=:d_n(1).
 \end{eqnarray*}

 Let $n\geq 2$ and $a\in\alpha_n$.  One can easily see that
 \[
\sup_{x\in K_\nu}d(x,\alpha_n)\leq2|K_\nu|.
\]
we set $\beta:=\alpha_n\setminus\{a\}$. Choose an arbitrary $b\in\beta$ and $y\in P_b(\alpha_n)$. Then for every $x\in P_a(\alpha_n)\cap K_\nu$,
\[
d(x,\beta)\leq d(x,y)+d(y,\alpha_n)\leq3|K_\nu|\leq 3.
\]
\begin{eqnarray*}
 e^r_{n-1,r}(\nu)&\leq&\int d(x,\beta)^rd\nu(x)\\&=&\sum_{b\in\beta}\int_{P_b(\alpha_n)}d(x,\beta)^rd\nu(x)+\int_{P_a(\alpha_n)}d(x,\beta)^rd\nu(x)\\
 &\leq&\sum_{b\in\beta}\int_{P_b(\alpha_n)}d(x,b)^rd\nu(x)+\nu(P_a(\alpha_n))3^r\\
 &=&\sum_{b\in\beta}\int_{P_b(\alpha_n)}d(x,\alpha_n)^rd\nu(x)+3^r\nu(P_a(\alpha_n)).
\end{eqnarray*}
  Using this and Lemma \ref{pre2}, we deduce
\begin{eqnarray*}
 \zeta_{n,r}\leq e^r_{n-1,r}(\nu)-e^r_{n,r}(\nu)\leq\nu(P_a(\alpha_n))3^r-\int_{P_a(\alpha_n)}d(x,\beta)^rd\nu(x).
\end{eqnarray*}
It follows that $\nu(P_a(\alpha_n))\geq 3^{-r} \zeta_{n,r}$. Following \cite[Proposition 12.12]{GL:00} and define
 \begin{equation*}
 \delta_B:=\inf\{\epsilon:\;\nu(B^\circ(a,\delta))\geq\frac{1}{2}\nu(P_a(\alpha_n))\},
 \end{equation*}
 where $B^\circ(a,\delta):=\{x\in\mathbb{R}^q:d(x,a)<\delta\}$. We have
 \[
 \sup_{x\in\mathbb{R}^q}\nu(B^\circ(x,\epsilon))\leq\sup_{x\in\mathbb{R}^q}\nu(B(x,\epsilon))\leq C\epsilon^t.
 \]
 Then by (12.13) and (12.14) of \cite{GL:00}, we have
 \[
 \nu(B^\circ(x,\delta_B))\leq\frac{1}{2}\nu(P_a(\alpha_n))\;\;\mbox{and}\;\;\delta_B\geq\big(\frac{1}{2}C^{-1}\nu(P_a(\alpha_n))\big)^{\frac{1}{t}}.
 \]
 Using this, we further deduce
 \begin{eqnarray*}
 I_a(\alpha_n,\nu)&\geq&\int_{P_a(\alpha_n)\setminus B^\circ(a,\delta_B)}d(x,a)^rd\nu(x)
 \\&\geq&\frac{1}{2}\nu(P_a(\alpha_n))\delta_B^r\\&\geq&\frac{1}{2}(C^{-1}2^{-1})^{\frac{r}{t}}\nu(P_a(\alpha_n)))^{1+\frac{r}{t}}
 \\&\geq&2^{-(1+\frac{r}{t})}C^{-\frac{r}{t}}(3^{-r} \zeta_{n,r})^{(1+\frac{r}{t})}=:d_n(2).
 \end{eqnarray*}
 The lemma follow by setting $d_n:=\min\{d_n(1),d_n(2)\}$.
 \end{proof}

 Now we return to the Ahlfors-David measure $\mu$ satisfying (\ref{dameasure}). For $\delta>0$, let $(F)_\delta$ denote the closed $\delta$-neighborhood of a set $F\subset\mathbb{R}^q$. Let $E_\sigma,A_\sigma,D_\sigma$ be as defined in (\ref{cylinder}) and $B$ a Borel set satisfying $D_\sigma\subset B\subset(A_\sigma)_{\frac{19}{16}|A_\sigma|}$. Then
\begin{eqnarray}\label{z5}
\frac{7}{16}|A_\sigma|\leq|B|\leq|(A_\sigma)_{\frac{19}{16}|A_\sigma|}|\leq\frac{27}{8}|A_\sigma|.
\end{eqnarray}
Also, by (\ref{dameasure}) and (\ref{z5}), we have
\begin{eqnarray}
\mu(B)\left\{ \begin{array}{ll}
\leq\mu\big((A_\sigma)_{\frac{19}{16}|A_\sigma|}\big)\leq C_2\big(\frac{27}{16}|A_\sigma|\big)^{s_0}\leq C_2\big(\frac{27}{7}\big)^{s_0}|B|^{s_0}\\\\
\geq\mu(D_\sigma)\geq C_1\big(\frac{7}{32}|A_\sigma|\big)^{s_0}\geq C_1\big(\frac{7}{108}\big)^{s_0}|B|^{s_0}
\end{array}\right..
\end{eqnarray}
For the largest choice $(A_\sigma)_{\frac{19}{16}|A_\sigma|}$ and the smallest one $D_\sigma$, we have
\begin{eqnarray}\label{z6}
\mu(D_\sigma)|D_\sigma|^r&\leq&\mu\big((A_\sigma)_{\frac{19}{16}|A_\sigma|}\big)|(A_\sigma)_{\frac{19}{16}|A_\sigma|}|^r
\leq\bigg(\frac{54C_2}{7C_1}\bigg)^{s_0+r}\mu(D_\sigma)|D_\sigma|^r.
\end{eqnarray}
Let $h_B$ be an arbitrary similitude of similarity ratio $|B|$. We define
\begin{equation}\label{z7}
\lambda_B:=\mu(\cdot|B)\circ h_B,\;{\rm implying}\;\;\mu(\cdot|B)=\lambda_B\circ h_B^{-1}.
\end{equation}
We denote by $K_B$ the support of $\lambda_B$. Then we have $|K_B|\leq1$.
\begin{lemma}\label{pre3}
Let $\mu$ be the Ahlfors-David measure satisfying (\ref{dameasure}). Assume that $B$ is a Borel set with $\xi|B|^{s_0}\leq\mu(B)\leq \xi^{-1}|B|^{s_0}$. Then there exists a constant $\xi_B>0$, such that for every $\epsilon>0$, we have
$\sup_{x\in K_B}\lambda_B(B(x,\epsilon))\leq \xi_B\epsilon^{s_0}$.
\end{lemma}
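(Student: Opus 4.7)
The plan is to unwind the definitions and reduce the claim to a direct application of the global ball-measure bound (\ref{s5}) together with the assumed lower bound $\mu(B)\geq\xi|B|^{s_0}$. Concretely, by the definition (\ref{z7}), for any Borel set $A$ we have
\[
\lambda_B(A)=\mu(h_B(A)\mid B)=\frac{\mu(h_B(A)\cap B)}{\mu(B)}\leq\frac{\mu(h_B(A))}{\mu(B)}.
\]
So the first step is just to rewrite $\lambda_B(B(x,\epsilon))$ in these terms.

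Next, since $h_B$ is a similitude of ratio $|B|$, it sends the closed ball $B(x,\epsilon)$ to the closed ball $B(h_B(x),|B|\epsilon)$. Applying the global estimate (\ref{s5}) (which, by the convention stated just after it, holds with constant $C_2$ for every $\epsilon\in(0,\infty)$ and every center in $\mathbb{R}^q$), we obtain
\[
\mu\bigl(h_B(B(x,\epsilon))\bigr)=\mu\bigl(B(h_B(x),|B|\epsilon)\bigr)\leq C_2\bigl(|B|\epsilon\bigr)^{s_0}.
\]
Combining this with the hypothesis $\mu(B)\geq\xi|B|^{s_0}$ gives
\[
\lambda_B(B(x,\epsilon))\leq\frac{C_2|B|^{s_0}\epsilon^{s_0}}{\xi|B|^{s_0}}=\frac{C_2}{\xi}\epsilon^{s_0},
\]
valid for \emph{every} $x\in\mathbb{R}^q$ and every $\epsilon>0$. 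The conclusion follows by setting $\xi_B:=C_2/\xi$ (which depends only on the constants $C_2$ and $\xi$, as expected since the factor $|B|^{s_0}$ cancels; this scale-invariance is precisely why the normalization in (\ref{z7}) was chosen with similarity ratio $|B|$).

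There is no real obstacle here: the only things to verify are (i) the elementary identity $h_B(B(x,\epsilon))=B(h_B(x),|B|\epsilon)$, which is immediate from $h_B$ being a similitude of ratio $|B|$, and (ii) that we may legitimately use (\ref{s5}) at the scale $|B|\epsilon$ with no restriction on $\epsilon$, which is exactly the content of the convention adopted right after (\ref{s5}). Thus the proof is essentially a one-line estimate once the definitions are unfolded.
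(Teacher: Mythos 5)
Your proof is correct and matches the paper's argument essentially line for line: unwind the definition of $\lambda_B$, use that $h_B$ maps $B(x,\epsilon)$ to $B(h_B(x),|B|\epsilon)$, apply the global upper bound $\mu(B(\cdot,\rho))\leq C_2\rho^{s_0}$ from the convention after (\ref{s5}), and divide by $\mu(B)\geq\xi|B|^{s_0}$ to get $\xi_B=\xi^{-1}C_2$. Nothing to add.
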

\begin{proof}
For every $x\in K_B$ and $\epsilon>0$, we have
\begin{eqnarray*}
\lambda_B(B(x,\epsilon))&=&\frac{1}{\mu(B)}\mu(h_B(B(x,\epsilon)\cap B))\\
&=&\frac{1}{\mu(B)}\mu((B(h_B(x),\epsilon|B|)\cap B))\\
&\leq&\frac{1}{\xi|B|^{s_0}}C_2(\epsilon|B|)^{s_0}\\
&=&\xi^{-1}C_2\epsilon^{s_0}.
\end{eqnarray*}
The lemma follows by setting $\xi_B:=\xi^{-1}C_2$.
\end{proof}

In the following, we will need to consider measures $\lambda_B$ for different sets $B$. For convenience, we write
\[
\lambda_B=:\left\{ \begin{array}{ll}
\nu_{\sigma,1}\;\;&\mbox{if}\;\;B=E_\sigma\\
\nu_{\sigma,2}\;\;&\mbox{if}\;\;B=(E_\sigma)_{\frac{1}{16}|E_\sigma|}\\
\nu_{\sigma,3}\;\;&\mbox{if}\;\;B=(A_\sigma)_{\frac{1}{8}|A_\sigma|}\\
\nu_{\sigma,4}\;\;&\mbox{if}\;\;B=D_\sigma
\end{array}\right.;\;h_B=:\left\{ \begin{array}{ll}
h_{\sigma,1}\;\;&\mbox{if}\;\;B=E_\sigma\\
h_{\sigma,2}\;\;&\mbox{if}\;\;B=(E_\sigma)_{\frac{1}{16}|E_\sigma|}\\
h_{\sigma,3}\;\;&\mbox{if}\;\;B=(A_\sigma)_{\frac{1}{8}|E_\sigma|}\\ h_{\sigma,4}\;\;&\mbox{if}\;\;B=D_\sigma
\end{array}\right..
\]
\begin{remark}
By Lemma \ref{pre1}, we are able to define a first constant $n_1$ which will be useful later. Let $M(\eta)$ be as defined in the proof of Lemma \ref{pre1}. We define
\begin{equation}
\eta_0:=C_1^{\frac{1}{r}}C_2^{-\frac{1}{r}}(18)^{-(1+\frac{s_0}{r})}\;\;\mbox{and}\;\;n_1:=M(\eta_0).
\end{equation}
Then for any probability $\nu$ with $|K_\nu|\leq1$, by Lemma \ref{pre1}, we have
\begin{equation}\label{s6}
e^r_{m_1-1,r}(\nu)\leq\eta_0^r=C_1C_2^{-1}(18)^{-(r+s_0)}.
\end{equation}
\end{remark}

When $(A_\sigma)_{\frac{1}{16}|A_\sigma|}\cap (A_\omega)_{\frac{1}{8}|A_\omega|}\neq\emptyset$ for some distinct $\sigma,\omega\in\Omega_k$, we will need to consider the following two choices of $B$:
\[
(A_\omega)_{\frac{1}{8}|A_\omega|}\setminus(E_\sigma)_{\frac{1}{16}|E_\sigma|},\;
(A_\omega)_{\frac{1}{8}|A_\omega|}\setminus E_\sigma.
\]
For these two choices of $B$, we write
\begin{eqnarray*}
\lambda_B=:\left\{ \begin{array}{ll}
\nu_{\omega,5}\;\;&\mbox{if}\;\;B=(A_\omega)_{\frac{1}{8}|A_\omega|}\setminus(E_\sigma)_{\frac{1}{16}|E_\sigma|}\\
\nu_{\omega,6}\;\;&\mbox{if}\;\;B=(A_\omega)_{\frac{1}{8}|A_\omega|}\setminus E_\sigma.
\end{array}\right.;\\h_B=:\left\{ \begin{array}{ll}
h_{\omega,5}\;\;&\mbox{if}\;\;B=(A_\omega)_{\frac{1}{8}|A_\omega|}\setminus(E_\sigma)_{\frac{1}{16}|E_\sigma|}\\
h_{\omega,6}\;\;&\mbox{if}\;\;B=(A_\omega)_{\frac{1}{8}|A_\omega|}\setminus E_\sigma.
\end{array}\right..
\end{eqnarray*}
Also, for the proof of the main result, we will consider a larger neighborhood $(A_\omega)_{\frac{19}{16}|A_\omega|}$ of $A_\omega$. So for $B=(A_\omega)_{\frac{19}{16}|A_\omega|}$, we also write
\[
\lambda_B=:\nu_{\omega,7},\;\;h_B:=h_{\omega,7}.
\]

With the next lemma, we define two more constants $n_2,n_3$. For this, we set
\[
n_0:=[(130)^q],\;\;k_1:=[(82)^q],\;\;k_2:=[(218)^q];\;k_3:=\big[\big(\frac{35}{4}\big)^q\big].
\]
By estimating the volumes, we know that for every $\sigma\in\Omega_k$, the set $(A)_\sigma$ can be covered by $n_0$ closed balls of radii $\frac{1}{64}|A_\sigma|$ which are centered in $A_\sigma$. In fact, let $n_0$ be the largest number of pairwise disjoint closed balls of radii $\frac{1}{128}|A_\sigma|$ which are centered in $A_\sigma$. Then we have
\[
n_0\bigg(\frac{1}{128}|A_\sigma|\bigg)^q\leq \bigg((1+\frac{1}{64})|A_\sigma|\bigg)^q.
\]
Similarly, one can see that the set $(A_\sigma)_{\frac{1}{8}|A_\sigma|}$ can be covered by $k_1$ closed balls of radii $\frac{1}{32}|A_\sigma|$ which are centered in $(A_\sigma)_{\frac{1}{8}|A_\sigma|}$; $(A_\sigma)_{\frac{19}{16}|A_\sigma|}$ can be covered by $k_2$ closed balls of radii $\frac{1}{32}|A_\sigma|$ which are centered in $(A_\sigma)_{\frac{19}{16}|A_\sigma|}$.
\begin{lemma}\label{pre5}
For $\sigma\in\Omega_k$, let $\nu_{\sigma,i},1\leq i\leq 7$, be defined as above. Then
\begin{enumerate}
\item[(1)] For $k\geq 1$, there exists a $\zeta_{k,r}>0$ which is independent of $\sigma$, such that
\[
\min_{1\leq i\leq 4}\big(e_{k-1,r}^r(\nu_{\sigma,i})-e_{k,r}^r(\nu_{\sigma,i})\big)\geq \zeta_{k,r}.
\]
\item[(2)] There exists an integer $n_2>n_1+k_1$, such that for $\omega\in\Omega_k$, and $i=3,5,6$,
\[
\sup_{n\geq n_2-n_1-k_1}e^r_{n,r}(\nu_{\omega,i})<\bigg(\frac{7C_1}{54C_2}\bigg)^{s_0+r}\zeta_{n_1,r}.
\]
\item[(3)]
There exists an integer $n_3>N(n_0+n_2)k_3+k_2=:n_4$, such that
\[
\sup_{n\geq n_3-n_4}e^r_{n,r}(\nu_{\omega,7})<\bigg(\frac{7C_1}{54C_2}\bigg)^{s_0+r}\zeta_{N(n_0+n_2),r},\;\;\mbox{for\;every} \;\omega\in\Omega_k.
\]
\end{enumerate}
\end{lemma}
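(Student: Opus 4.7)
My plan is to match the three assertions with the three preliminary lemmas: I obtain (1) from Lemma \ref{pre2}, and (2)--(3) from Lemma \ref{pre1}. The underlying observation is that each defining set $B$ is a closed ball centered at $c_\sigma\in K$ of radius comparable to $m^{-k}$ (for $i=1,2,3,4,7$), or an annular piece of such a ball (for $i=5,6$), and in every case the Ahlfors--David bound \eqref{dameasure} gives $\mu(B)\asymp|B|^{s_0}$ with constants depending only on $C_1,C_2,s_0$ and the fixed numerical ratios in the definitions.

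For (1), I fix $i\in\{1,2,3,4\}$ and the corresponding centered ball $B$. Lemma \ref{pre3}, together with a standard doubling argument to upgrade the estimate from $x\in K_B$ to $x\in\mathbb{R}^q$, yields the uniform bound $\lambda_B(B(x,\epsilon))\le 2^{s_0}\xi_B\epsilon^{s_0}$. Since $|K_B|\le 1$ as well, the hypotheses of Lemma \ref{pre2} hold with $t=s_0$ and a constant independent of $\sigma$ and $k$, so for $k\ge 2$ I obtain a uniform $\zeta_{k,r}^{(i)}>0$ satisfying $e_{k-1,r}^r(\nu_{\sigma,i})-e_{k,r}^r(\nu_{\sigma,i})\ge\zeta_{k,r}^{(i)}$. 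Taking $\zeta_{k,r}:=\min_{1\le i\le 4}\zeta_{k,r}^{(i)}$ completes part (1).

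For (2) and (3) no density information is needed at all: Lemma \ref{pre1} only requires $|K_\nu|\le 1$, which holds by construction for every $\nu_{\omega,i}$. For (2) I choose $\eta>0$ with
\begin{equation*}
\eta^r<\Bigl(\frac{7C_1}{54C_2}\Bigr)^{s_0+r}\zeta_{n_1,r}
\end{equation*}
and set $n_2:=M(\eta)+n_1+k_1$; then $n_2>n_1+k_1$ and $n_2-n_1-k_1=M(\eta)\ge M(\eta)-1$, so Lemma \ref{pre1} delivers the required strict bound simultaneously for $i=3,5,6$. For (3) I make the analogous choice $\eta'>0$ with $(\eta')^r<\bigl(\tfrac{7C_1}{54C_2}\bigr)^{s_0+r}\zeta_{N(n_0+n_2),r}$ and put $n_3:=M(\eta')+n_4+1$, giving the claim for $\nu_{\omega,7}$.

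The only genuinely non-routine point is the uniformity hidden in part (1): that $\xi_B$ in Lemma \ref{pre3}, and hence the resulting $\zeta_{k,r}$ from Lemma \ref{pre2}, can be chosen independently of $\sigma\in\Omega_k$ and $k\ge k_0$. This is precisely what the two-sided hypothesis \eqref{dameasure} buys: both the lower bound $\mu(B)\ge\xi|B|^{s_0}$ (which feeds into $\xi_B^{-1}$) and the global upper estimate on $\mu(B(x,\epsilon))$ hold with constants independent of the particular scaled ball $B$ centered at a point of $K$, so the pullback measures $\nu_{\sigma,i}$ are treated on completely equal footing across all scales.
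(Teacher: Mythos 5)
Your proposal is correct and follows essentially the same route as the paper: part (1) combines Lemma \ref{pre3} (uniform Ahlfors--David control on the rescaled measures, with constants depending only on $C_1,C_2,s_0$ and the fixed numerical radii) with Lemma \ref{pre2}, and parts (2)--(3) are direct applications of Lemma \ref{pre1} using only $|K_\nu|\le 1$. Your explicit doubling step upgrading Lemma \ref{pre3}'s bound from $K_B$ to all of $\mathbb{R}^q$, and your choice of $\eta$ with strict inequality $\eta^r<(\tfrac{7C_1}{54C_2})^{s_0+r}\zeta_{n_1,r}$ rather than equality, are small tidyings of details the paper leaves implicit, but the substance and the choice of constants ($n_2$, $n_3$) are the same.
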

\begin{proof}
(1) Let $k\geq k_0$ and $\sigma\in\Omega_k$. Then by (\ref{dameasure}), we have
\begin{eqnarray*}
C_1\bigg(\frac{7}{40}\bigg)^{s_0}|(A_\sigma)_{\frac{1}{8}|E_\sigma|}|^{s_0}&\leq&\mu(D_\sigma)\leq\mu(E_\sigma)
\leq\mu\big((E_\sigma)_{\frac{1}{16}|E_\sigma|}\big)
\\&\leq&\mu\big((A_\sigma)_{\frac{1}{8}|A_\sigma|}\big)\leq C_2\bigg(\frac{10}{7}\bigg)^{s_0}|D|^{s_0}.
\end{eqnarray*}
Set $\xi:=\frac{40}{7}$. Then for this $\xi$, Lemma \ref{pre3} holds for all the following choices for $B$:
\[
D_\sigma, E_\sigma, (E_\sigma)_{\frac{1}{16}|E_\sigma|},(A_\sigma)_{\frac{1}{8}|E_\sigma|}.
\]
Thus, according to Lemma \ref{pre2}, (1) follows by setting
\[
\zeta_{k,r}:=\frac{1}{2l_k}(1-2^{-r})(4(k-1)\xi)^{-\frac{1}{s_0}}.
\]

(2) Note that $|K_{\nu_{\omega,3}}|\leq1$. By Lemma \ref{pre1}, (2) follows by setting
\[
\eta:=\bigg(\frac{7C_1}{54C_2}\bigg)^{\frac{s_0+r}{r}}\zeta_{n_1,r}^{\frac{1}{r}},\;\;n_2:=M(\eta)+k_1+n_1+1.
\]

(3) By Lemma \ref{pre1}, it suffices to set
\[
\eta:=\bigg(\frac{7C_1}{54C_2}\bigg)^{\frac{s_0+r}{r}}\zeta_{N(n_0+n_2),r}^{\frac{1}{r}},\;\;n_3:=M(\eta)+n_4+1.
\]
This completes the proof of the lemma.
\end{proof}

\section{A characterization for the $n$-optimal sets for $\mu$}
 For each $n\geq (n_0+n_2)\phi_1$, there exists a unique $k$ such that
 \[
 (n_0+n_2)\phi_k\leq n<(n_0+n_2)\phi_{k+1}.
 \]

 Next, we fix an arbitrary $\alpha_n\in C_{n,r}$. We need to establish a characterization for the positions where the points of $\alpha_n$ are lying.  Write
 \[
 \alpha_n(1):=\alpha_n\setminus\bigcup_{\sigma\Omega_k}(A_\sigma)_{\frac{1}{16}|A_\sigma|},\;L_c:=\mbox{card}(\alpha_n(1)),
 \;\;\alpha_n(2):=\alpha_n\setminus\alpha_n(1).
 \]
\begin{lemma}\label{ess1}
We have $L_c\leq n_0\phi_k$.
\end{lemma}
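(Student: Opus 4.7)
The plan is a proof by contradiction: assume $L_c>n_0\phi_k$ and construct a codebook $\beta_n$ with ${\rm card}(\beta_n)\leq n$ and $I(\beta_n,\mu)<I(\alpha_n,\mu)=e_{n,r}^r(\mu)$, contradicting the optimality of $\alpha_n$. The key observation is that each point in $\alpha_n(1)$ is ``wasteful'' from the point of view of approximating $K$: by the definition of $\alpha_n(1)$ combined with the inclusion $K\subset\bigcup_\sigma A_\sigma$ noted in the proof of Lemma \ref{first}, every $a\in\alpha_n(1)$ satisfies $d(x,a)>\tfrac{1}{16}|A_\sigma|=\tfrac{m^{-k}}{4}$ for all $x\in A_\sigma\cap K$; however each $A_\sigma$ admits a cover by only $n_0$ closed balls of the strictly smaller radius $\tfrac{1}{64}|A_\sigma|=\tfrac{m^{-k}}{16}$. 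Trading the first type of point for the second type should strictly lower the error.

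For the construction, for each $\sigma\in\Omega_k$ I pick $C_\sigma\subset A_\sigma$ of cardinality $n_0$ consisting of the centers of such a cover, and set $\beta_n:=\alpha_n(2)\cup\bigcup_{\sigma\in\Omega_k}C_\sigma$. Under the contradiction hypothesis, ${\rm card}(\beta_n)\leq(n-L_c)+n_0\phi_k\leq n-1$. I split $K=S\sqcup(K\setminus S)$, where $S:=\bigcup_{a\in\alpha_n(1)}\bigl(P_a(\alpha_n)\cap K\bigr)$. On $K\setminus S$ the Voronoi representative of $x$ lies in $\alpha_n(2)\subset\beta_n$, so $d(x,\beta_n)\leq d(x,\alpha_n)$. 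On $S$, every $x$ lies in some $A_\sigma$, hence $d(x,\beta_n)\leq d(x,C_\sigma)\leq m^{-k}/16$, whereas $d(x,\alpha_n)=d(x,a)>m^{-k}/4$ for the associated representative $a\in\alpha_n(1)$. Integrating the pointwise bounds gives
\[
I(\alpha_n,\mu)-I(\beta_n,\mu)\geq\bigl((m^{-k}/4)^r-(m^{-k}/16)^r\bigr)\mu(S).
\]

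It remains to verify $\mu(S)>0$. By \cite[Theorem 4.11]{GL:00}, $\mu(P_a(\alpha_n))>0$ for every $a\in\alpha_n$; since our hypothesis $L_c>n_0\phi_k\geq 0$ ensures that $\alpha_n(1)\neq\emptyset$, and since ${\rm supp}(\mu)=K$, picking any $a\in\alpha_n(1)$ gives $\mu(S)\geq\mu(P_a(\alpha_n))>0$. Hence $I(\beta_n,\mu)<e_{n,r}^r(\mu)$ with ${\rm card}(\beta_n)\leq n$, contradicting optimality. I do not anticipate a serious obstacle: the decomposition of $K$ is automatically disjoint because we have fixed a Voronoi partition, so there is no boundary ambiguity; and the factor-of-four gap between $\tfrac{1}{16}|A_\sigma|$ and $\tfrac{1}{64}|A_\sigma|$ supplies the strictly positive gain that drives the contradiction, no matter how large the absolute constant $n_0=[130^q]$ may be.
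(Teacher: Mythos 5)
Your proposal is correct and takes essentially the same approach as the paper: assuming $L_c>n_0\phi_k$, replace $\alpha_n(1)$ by the $n_0\phi_k$ cover points $\bigcup_{\sigma}C_\sigma$ (a strict cardinality saving), and use the gap between the coverage radius $\tfrac{1}{64}|A_\sigma|$ and the exclusion radius $\tfrac{1}{16}|A_\sigma|$ to drive down the error on the Voronoi cells of $\alpha_n(1)$. The only difference from the paper's argument is cosmetic: the paper splits into the cases $F_n=\emptyset$ and $F_n\neq\emptyset$ (where $F_n$ is the set of $x\in K$ whose nearest codepoint lies in $\alpha_n(1)$) and in the latter case localizes the gain to a small ball $B(x_0,\tfrac{|A_\sigma|}{32})$ around a witness $x_0\in F_n\cap A_\sigma$; you instead invoke \cite[Theorem 4.11]{GL:00} to note that every Voronoi cell has positive $\mu$-measure, so $S=\bigcup_{a\in\alpha_n(1)}P_a(\alpha_n)\cap K$ automatically has $\mu(S)>0$ and the first case never arises. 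Both routes reach the contradiction $I(\beta_n,\mu)<I(\alpha_n,\mu)$; yours is a touch cleaner because it avoids the (in fact vacuous) case distinction, at the price of importing the nontrivial fact that optimal cells are non-null.
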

\begin{proof}
Suppose that $L_c>n_0\phi_k$. we deduce a contradiction. Write
\[
F_n:=\{x\in K: d(x,\alpha_n)=d(x,\alpha_n(1))\}.
\]
We distinguish two cases.

Case 1: $F_n=\emptyset$. In this case, we set $\beta:=\alpha_n(2)$. Then we have ${\rm card}(\beta)<n$ and $I(\beta,\mu)=I(\alpha_n,\mu)$. This contradicts the optimality of $\alpha_n$.

Case 2: $F_n\neq\emptyset$. Then for each $\sigma\in\Omega_k$, we denote by $\gamma_\sigma$ the set of the centers of $n_0$ closed balls of radii $\frac{|A_\sigma|}{64}$ which is centered in $A_\sigma$ and cover $A_\sigma$. We set
\[
\beta:=\alpha_n(2)\cup\bigg(\bigcup_{\sigma\in\Omega_k}\gamma_\sigma\bigg).
\]
Then we have ${\rm card}(\beta)<n$. We have $K\subset \bigcup_{\sigma\in\Omega_k}A_\sigma$ and
\begin{equation}\label{g01}
\sup_{x\in A_\sigma}d(x,\beta)\leq \frac{|A_\sigma|}{64}<\frac{|A_\sigma|}{16}.
\end{equation}
Note that $\beta\supset\alpha_n(2)$. So by (\ref{g01}), we have $d(x,\beta)\leq d(x,\alpha_n)$ for all $x\in K$. For every $x\in F_n\cap A_\sigma$ and $y\in B(x,\frac{|A_\sigma|}{32})$, we have $d(x,\alpha_n)\geq\frac{1}{16}|A_\sigma|$ and
\[
d(y,\alpha_n)\geq d(x,\alpha_n)-d(x,y)\geq \frac{|A_\sigma|}{16}-\frac{|A_\sigma|}{32}=\frac{|A_\sigma|}{32}.
\]
We fix an arbitrary $x_0\in F_n\cap A_\sigma$. We have
\begin{eqnarray*}
I(\alpha_n,\mu)-I(\beta,\mu)&\geq&\int_{B(x_0,\frac{|A_\sigma|}{32})}d(x,\alpha_n)^r-d(x,\beta)^rd\mu(x)\\
&\geq&\big((32)^{-r}-(64)^{-r}\big)|A_\sigma|^r\mu(B(x_0,\frac{|A_\sigma|}{32})
>0.
\end{eqnarray*}
It follows that $I(\alpha_n,\mu)>I(\beta,\mu)$, contradicting the optimality of $\alpha_n$.
\end{proof}

For $\sigma\in\Omega_k$ and $\beta\subset\mathbb{R}^q$, we write
\[
G_\sigma:=\big\{x\in E_\sigma\cap K: d(x,\beta)=d(x,\beta\setminus (A_\sigma)_{\frac{1}{16}|A_\sigma|)}\big\}.
\]

\begin{lemma}\label{pre6}
Let $\sigma\in\Omega_k$ and $\beta\subset\mathbb{R}^q$ with ${\rm card}(\beta\cap (A_\sigma)_{\frac{1}{16}|A_\sigma|})<n_1$. We have
\[
\left\{ \begin{array}{ll}
\int_{(E_\sigma)_{\frac{|E_\sigma|}{16}}}d(x,\beta)^rd\mu(x)\geq\mu\big((E_\sigma)_{\frac{|E_\sigma|}{16}}\big)|(E_\sigma)_{\frac{|E_\sigma|}{16}}|^r
e^r_{n_1-1,r}(\nu_{\sigma,2})&\mbox{if}\;\;G_\sigma\neq\emptyset\\
\int_{E_\sigma}d(x,\beta)^rd\mu(x)\geq\mu((E_\sigma)|E_\sigma|^r
e^r_{n_1-1,r}(\nu_{\sigma,1})&\mbox{if}\;\;G_\sigma=\emptyset
\end{array}\right..
\]
\end{lemma}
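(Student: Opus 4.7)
The plan is to split into the two disjunctive cases, handling Case 2 by a clean change of variables and attacking Case 1 by a direct geometric lower bound compared against the upper bound on the RHS coming from (\ref{s6}).

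For Case 2 ($G_\sigma=\emptyset$), the idea is: since $d(x,\beta)\le d(x,\beta\setminus(A_\sigma)_{\frac{1}{16}|A_\sigma|})$ always, $G_\sigma=\emptyset$ forces strict inequality at every $x\in E_\sigma\cap K$, i.e.\ the nearest $\beta$-point of such $x$ lies in $\beta':=\beta\cap(A_\sigma)_{\frac{1}{16}|A_\sigma|}$, so $d(x,\beta)=d(x,\beta')$ on $E_\sigma\cap K$. Since $\mu$ is supported on $K$, I would replace $\beta$ by $\beta'$ in the integrand and then change variables via $\mu(\cdot|E_\sigma)=\nu_{\sigma,1}\circ h_{\sigma,1}^{-1}$ to rewrite the integral as $\mu(E_\sigma)|E_\sigma|^r\int d(y,h_{\sigma,1}^{-1}(\beta'))^r d\nu_{\sigma,1}(y)$. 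Because $|\beta'|\le n_1-1$ by hypothesis, this last integral is at least $e^r_{n_1-1,r}(\nu_{\sigma,1})$ by the very definition of the quantization error, which concludes the case.

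For Case 1 ($G_\sigma\ne\emptyset$), the change-of-variables trick breaks down because the minimum may be attained outside $(A_\sigma)_{\frac{1}{16}|A_\sigma|}$, so I would instead extract a direct geometric estimate. Pick a witness $x_0\in G_\sigma\subset E_\sigma\cap K$ and $a^*\in\beta\setminus(A_\sigma)_{\frac{1}{16}|A_\sigma|}$ with $d(x_0,a^*)=d(x_0,\beta)$. Using $(A_\sigma)_{\frac{1}{16}|A_\sigma|}=B(c_\sigma,\tfrac{9m^{-k}}{4})$ and $d(x_0,c_\sigma)\le m^{-k}$, the triangle inequality gives $d(x_0,\beta)>\tfrac{5m^{-k}}{4}$, hence $d(y,\beta)>\tfrac{9m^{-k}}{8}$ for every $y\in B(x_0,m^{-k}/8)$. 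Since $B(x_0,m^{-k}/8)\subset (E_\sigma)_{\frac{|E_\sigma|}{16}}$ and $x_0\in K$, the Ahlfors-David lower bound yields
\[
\int_{(E_\sigma)_{\frac{|E_\sigma|}{16}}}d(x,\beta)^rd\mu(x)\ge C_1(m^{-k}/8)^{s_0}(9m^{-k}/8)^r.
\]

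To close Case 1, I would compare this with the RHS. Using $|K_{\nu_{\sigma,2}}|\le1$ together with (\ref{s6}) gives $e^r_{n_1-1,r}(\nu_{\sigma,2})\le\eta_0^r=C_1C_2^{-1}18^{-(s_0+r)}$; combined with $\mu((E_\sigma)_{\frac{|E_\sigma|}{16}})\le C_2(9m^{-k}/8)^{s_0}$ and $|(E_\sigma)_{\frac{|E_\sigma|}{16}}|=\tfrac{9m^{-k}}{4}$, the RHS is at most $C_1m^{-k(s_0+r)}(1/16)^{s_0}(1/8)^r$. This is strictly smaller than the lower bound above by the factor $2^{s_0}9^r$, so the inequality follows. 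The main obstacle is precisely this bookkeeping of constants: the radii $m^{-k}/8,\,9m^{-k}/8,\,9m^{-k}/4$ arising from the geometry of the witness neighborhood must mesh cleanly with the factor $18^{-(s_0+r)}$ that was pre-engineered into $\eta_0$, and the identity $18=16\cdot\tfrac{9}{8}=8\cdot\tfrac{9}{4}$ is what makes the estimate valid with room to spare.
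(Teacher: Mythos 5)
Your proposal is correct and follows essentially the same route as the paper: Case 2 is the identical change-of-variables argument, and Case 1 uses the same witness $x_0\in G_\sigma$, the same triangle-inequality estimate $d(y,\beta)>\tfrac{9}{8}m^{-k}$ on $B(x_0,m^{-k}/8)$, and the same Ahlfors--David bounds compared against the $\eta_0$-engineered estimate~(\ref{s6}). The only cosmetic difference is that you carry out the Case 1 comparison directly in $\mu$ rather than routing through the normalized measure $\nu_{\sigma,2}$ as the paper does; unwinding the normalization yields exactly your constants.
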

\begin{proof}
Assume that $G_\sigma\neq\emptyset$. Then there exists some $x_0\in E_\sigma\cap K$, such that
\[
d(x_0,\beta)>\big(\frac{1}{4}+\frac{1}{16}\big)|A_\sigma|=\frac{5}{8}|E_\sigma|.
\]
Thus, for every $x\in B(x_0,\frac{1}{16}|E_\sigma|)$, we have
\[
d(x,\beta)\geq d(x_0,\beta)-d(x,x_0)>\big(\frac{5}{8}-\frac{1}{16}\big)|E_\sigma|=\frac{9}{16}|E_\sigma|=\frac{1}{2}\big|(E_\sigma)_{\frac{|E_\sigma|}{16}}\big|.
\]
Hence, for $x\in h_{\sigma,2}^{-1}(B(x_0,\frac{1}{16}|E_\sigma|)$, we have
\begin{equation}\label{g02}
d(x,h_{\sigma,2}^{-1}(\beta)\geq\frac{1}{2}.
\end{equation}
Note that $B(x_0,\frac{1}{16}|E_\sigma|)\subset (E_\sigma)_{\frac{|E_\sigma|}{16}}$ and $|E_\sigma|<\epsilon_0$. We have
\begin{eqnarray}\label{g03}
\nu_{\sigma,2}\big(h_{\sigma,2}^{-1}(B(x_0,\frac{1}{16}|E_\sigma|))\big)&=&\frac{\mu\big(B(x_0,\frac{1}{16}|E_\sigma|)
\cap(E_\sigma)_{\frac{|E_\sigma|}{16}}\big)}{\mu((E_\sigma)_{\frac{|E_\sigma|}{16}})}=\frac{\mu\big(B(x_0,\frac{1}{16}|E_\sigma|)
\big)}{\mu((E_\sigma)_{\frac{|E_\sigma|}{16}})}\nonumber\\&\geq& \frac{C_1\big(\frac{1}{16}|E_\sigma|\big)^{s_0}}{C_2\big(\frac{9}{16}|E_\sigma|\big)^{s_0}}
=C_1C_2^{-1}\frac{1}{9^{s_0}}.
\end{eqnarray}
Using (\ref{s6}), (\ref{g02}) and (\ref{g03}) we deduce
\begin{eqnarray*}
\int_{(E_\sigma)_{\frac{|E_\sigma|}{16}}}d(x,\beta)^rd\mu(x)&=&\mu\big((E_\sigma)_{\frac{|E_\sigma|}{16}}\big)|(E_\sigma)_{\frac{|E_\sigma|}{16}}|^r
\int d(x,h_{\sigma,2}^{-1}(\beta))^rd\nu_{\sigma,2}(x)\\&\geq&\mu\big((E_\sigma)_{\frac{|E_\sigma|}{16}}\big)|(E_\sigma)_{\frac{|E_\sigma|}{16}}|^r
\nu_{\sigma,2}\big(h_{\sigma,2}^{-1}(B(x_0,\frac{1}{16}|E_\sigma|))\big)\big(\frac{1}{2}\big)^r\\
&>&\mu\big((E_\sigma)_{\frac{|E_\sigma|}{16}}\big)|(E_\sigma)_{\frac{|E_\sigma|}{16}}|^rC_1C_2^{-1}\frac{1}{9^{s_0}}\frac{1}{2^r}\\
&>&\mu\big((E_\sigma)_{\frac{|E_\sigma|}{16}}\big)|(E_\sigma)_{\frac{|E_\sigma|}{16}}|^re^r_{n_1-1,r}(\nu_{\sigma,2}).
\end{eqnarray*}

Next, we assume that $G_\sigma=\emptyset$. Then for every $x\in E_\sigma\cap K$, we have
\begin{equation}\label{temp2}
d(x,\beta)=d(x,\beta\cap (A_\sigma)_{\frac{1}{16}|A_\sigma|}).
\end{equation}
By the hypothesis, ${\rm card}(\beta\cap (A_\sigma)_{\frac{1}{16}|A_\sigma|})<n_1$. Using this and (\ref{temp2}), we deduce
\begin{eqnarray*}
\int_{E_\sigma}d(x,\beta)^rd\mu(x)&=&\mu(E_\sigma)|E_\sigma|^r
\int d(x,h_{\sigma,2}^{-1}(\beta))^rd\nu_{\sigma,1}(x)\\&\geq&\mu(E_\sigma)|E_\sigma|^re^r_{n_1-1,r}(\nu_{\sigma,1}).
\end{eqnarray*}
This completes the proof of the lemma.
\end{proof}

For each $\sigma\in\Omega_k$, we write $L_\sigma:={\rm card}\big(\alpha_n\cap(A_\sigma)_{\frac{1}{16}|A_\sigma|}\big)$. We have
\begin{lemma}\label{key1}
For every $\sigma\in\Omega_k$, we have $L_\sigma\geq n_1$.
\end{lemma}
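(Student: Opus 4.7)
The plan is to argue by contradiction: assume $L_\sigma < n_1$ for some $\sigma \in \Omega_k$, and exhibit a competitor $\beta$ with $\operatorname{card}(\beta) = n$ and $I(\beta, \mu) < I(\alpha_n, \mu)$, contradicting $\alpha_n \in C_{n, r}(\mu)$. The strategy is a swap: concentrate extra points in $E_\sigma$ to shrink the local error (which Lemma \ref{pre6} shows is large), and compensate by deleting points from a ``heavy'' cell $A_\tau$ whose local error is absorbed by Lemma \ref{pre5}(2).

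To find the heavy cell, I use Lemma \ref{ess1}: since $L_c \leq n_0 \phi_k$, we have $\sum_{\omega \in \Omega_k} L_\omega \geq n - n_0 \phi_k \geq n_2 \phi_k$, so by pigeonhole some $\tau \in \Omega_k$ has $L_\tau \geq n_2$, and $\tau \neq \sigma$ since $n_2 > n_1$. Then I construct $\beta$ by three operations: (i) pull back an $n_1$-optimal set for $\nu_{\sigma, i}$ via $h_{\sigma, i}$ to get $\gamma \subset E_\sigma$, taking $i = 1$ if $G_\sigma = \emptyset$ and $i = 2$ otherwise; (ii) remove $B \subset \alpha_n \cap (A_\tau)_{\frac{1}{16}|A_\tau|}$ with $|B| = n_1 + k_1$ (available since $L_\tau \geq n_2 > n_1 + k_1$ by the definition of $n_2$); (iii) add $\xi_\tau$ consisting of $k_1$ centers covering $(A_\tau)_{\frac{1}{8}|A_\tau|}$ by balls of radius $\frac{1}{32}|A_\tau|$. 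Setting $\beta := (\alpha_n \setminus B) \cup \gamma \cup \xi_\tau$, we have $\operatorname{card}(\beta) = n$.

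The two principal estimates then follow. On $E_\sigma$ (resp.\ $(E_\sigma)_{\frac{|E_\sigma|}{16}}$ if $G_\sigma \neq \emptyset$), Lemma \ref{pre6} applied to $\alpha_n$ gives $\int_{E_\sigma} d(x, \alpha_n)^r d\mu \geq \mu(E_\sigma)|E_\sigma|^r e^r_{n_1-1, r}(\nu_{\sigma, 1})$, while $\gamma \subset \beta$ yields $\int_{E_\sigma} d(x, \beta)^r d\mu \leq \mu(E_\sigma)|E_\sigma|^r e^r_{n_1, r}(\nu_{\sigma, 1})$; Lemma \ref{pre5}(1) then makes the net gain at least $\mu(E_\sigma)|E_\sigma|^r \zeta_{n_1, r}$. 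On $(A_\tau)_{\frac{1}{8}|A_\tau|}$, $\beta$ contains $L_\tau - n_1 \geq n_2 - n_1 - k_1$ relevant points (surviving locals plus the $k_1$ covers), so rescaling via $h_{\tau, 3}$ and invoking Lemma \ref{pre5}(2) bounds $\int_{(A_\tau)_{\frac{1}{8}|A_\tau|}} d(x, \beta)^r d\mu$ by $\mu((A_\tau)_{\frac{1}{8}|A_\tau|}) |(A_\tau)_{\frac{1}{8}|A_\tau|}|^r (\frac{7 C_1}{54 C_2})^{s_0 + r} \zeta_{n_1, r}$, which via (\ref{z6}) collapses to $\mu(D_\tau)|D_\tau|^r \zeta_{n_1, r}$.

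The hard part will be controlling the ``rest'' region: points $x$ outside $E_\sigma \cup (A_\tau)_{\frac{1}{8}|A_\tau|}$ whose nearest neighbor in $\alpha_n$ lay in $B$ can see $d(x, \alpha_n)$ grow under removal. A careful argument --- combining the Ahlfors-David mass distribution with the optimality of $\alpha_n$ --- is needed to show the contribution from this region is absorbed by the dominant gain. Granting this, and exploiting that $E_\sigma$ is a larger ball than $D_\tau$ and both are centered in $K$, the Ahlfors-David inequality (\ref{dameasure}) forces $\mu(E_\sigma)|E_\sigma|^r \zeta_{n_1, r}$ to strictly exceed $\mu(D_\tau)|D_\tau|^r \zeta_{n_1, r}$ once the bookkeeping of the constants $C_1, C_2$ is done, producing the desired contradiction $I(\beta, \mu) < I(\alpha_n, \mu)$.
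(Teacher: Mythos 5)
Your overall strategy matches the paper's: a swap argument combined with the pigeonhole bound from Lemma \ref{ess1} to locate a heavy cell $A_\tau$ with $L_\tau\geq n_2$, then a competitor $\beta$ that concentrates points in $E_\sigma$ (where $L_\sigma<n_1$) at the expense of $(A_\tau)_{\frac{1}{16}|A_\tau|}$. The covering set $\xi_\tau$ and the invocation of Lemmas \ref{pre6}, \ref{pre5}, and \eqref{z6} are exactly the paper's ingredients. However, there are two genuine gaps in the execution.

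First, you bound $\int_{(A_\tau)_{\frac{1}{8}|A_\tau|}}d(x,\beta)^r d\mu$ by $\mu((A_\tau)_{\frac{1}{8}|A_\tau|})|(A_\tau)_{\frac{1}{8}|A_\tau|}|^r e^r_{n_2-n_1-k_1,r}(\nu_{\tau,3})$ on the grounds that $\beta$ contains $L_\tau-n_1\geq n_2-n_1-k_1$ points in that region. But $e^r_{n,r}$ is an \emph{infimum} over sets of cardinality $n$, and your $\beta\cap(A_\tau)_{\frac{1}{8}|A_\tau|}$ is the arbitrary surviving subset $(\alpha_n\cap(A_\tau)_{\frac{1}{16}|A_\tau|})\setminus B$ together with $\xi_\tau$ --- not an optimal set for $\nu_{\tau,3}$. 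The integral against a suboptimal set of the same cardinality is not bounded by the optimal error, so Lemma \ref{pre5}(2) does not apply. The paper avoids this by removing \emph{all} of $\alpha_n\cap(A_\tau)_{\frac{1}{16}|A_\tau|}$ and inserting $h_{\tau,3}(\gamma_{L_\tau-k_1-n_1}(\nu_{\tau,3}))$ --- the pullback of a genuine $(L_\tau-k_1-n_1)$-optimal set for $\nu_{\tau,3}$ --- alongside $\xi_\tau$. Your construction must be modified the same way.

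Second, you do not address the possibility that $(A_\tau)_{\frac{1}{8}|A_\tau|}$ and $(A_\sigma)_{\frac{1}{16}|A_\sigma|}$ intersect. When they do, the ``gain'' region $E_\sigma$ (or $(E_\sigma)_{\frac{|E_\sigma|}{16}}$) and the ``loss'' region $(A_\tau)_{\frac{1}{8}|A_\tau|}$ overlap, and the decomposition $I(\alpha_n,\mu)-I(\beta,\mu)\geq\Delta_1-\Delta_2$ over $E_\sigma$ and $(A_\tau)_{\frac{1}{8}|A_\tau|}$ separately would double-count. This is why the paper introduces the auxiliary measures $\nu_{\omega,5}$ and $\nu_{\omega,6}$ on the difference sets $(A_\omega)_{\frac{1}{8}|A_\omega|}\setminus(E_\sigma)_{\frac{1}{16}|E_\sigma|}$ and $(A_\omega)_{\frac{1}{8}|A_\omega|}\setminus E_\sigma$ and treats the overlapping and non-overlapping cases separately. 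Your proof needs the same case split. Finally, a minor point: you flag the ``rest'' region as the hard part, but this is actually dispatched cleanly by $\xi_\tau$ via a triangle-inequality argument --- any $x\notin(A_\tau)_{\frac{1}{8}|A_\tau|}$ whose nearest $\alpha_n$-point lay in $(A_\tau)_{\frac{1}{16}|A_\tau|}$ now has a strictly closer point in $\xi_\tau$, so $d(x,\beta)\leq d(x,\alpha_n)$ there with no further argument; the optimality of $\alpha_n$ and the Ahlfors--David mass distribution are not needed for this step.
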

\begin{proof}
Suppose that $L_\sigma<n_1$ for some $\sigma\in\Omega_k$. We will deduce a contradiction. Note that $n\geq\phi_k(n_0+n_2)$ and $n_2>n_1$.
By Lemma \ref{ess1}, we have
\[
{\rm card}(\alpha_n(2))-L_\sigma\geq(n_0+n_2)\phi_k-n_0\phi_k-n_1\geq(\phi_k-1)n_2.
\]
Hence, there exists some $\omega\in\Omega_k$ with $L_\omega\geq n_2$. Next, we distinguish two cases.

\emph{Case 1}: $(A_\omega)_{\frac{1}{8}|A_\omega|}\cap (A_\sigma)_{\frac{1}{16}|A_\sigma|}=\emptyset$.
Let $\beta_\omega$ denote the set of the centers of $k_1$ closed balls of radii $\frac{1}{32}|A_\omega|$ which are centered in $(A_\omega)_{\frac{1}{8}|A_\omega|}$ and cover $(A_\omega)_{\frac{1}{8}|A_\omega|}$. We have two subcases.

\emph{Case (1a)}: $G_\sigma\neq\emptyset$. In this case, we set
\begin{eqnarray*}
&&\gamma_{L_\omega-k_1-n_1}(\nu_{\omega,3})\in C_{L_\omega-k_1-n_1,r}(\nu_{\omega,3}),\;\beta_{n_1}(\nu_{\sigma,2})\in C_{n_1,r}(\nu_{\sigma,2}).
\\&&\beta:=\big(\alpha_n\setminus(A_\omega)_{\frac{1}{16}|A_\omega|}\big)\cup \beta_\omega\cup h_{\omega,3}(\gamma_{L_\omega-k_1-n_1}(\nu_{\omega,3}))\cup h_{\sigma,2}(\beta_{n_1}(\nu_{\sigma,2})).
\end{eqnarray*}
Then we have ${\rm card}(\beta)\leq n$. By triangle inequality, one can see that
\[
d(x,\beta)\leq d(x,\alpha_n),\;\mbox{for\;every}\;\;x\in K\setminus(A_\omega)_{\frac{1}{8}|A_\omega|}.
\]
As a consequence, for $J:=K\setminus(A_\omega)_{\frac{1}{8}|A_\omega|}$, we have
\begin{eqnarray}\label{temp1}
\int_J d(x,\beta)^rd\mu(x)\leq\int_J d(x,\alpha_n)^rd\mu(x).
\end{eqnarray}
This allows us to focus on $(A_\omega)_{\frac{1}{8}|A_\omega|}$ and  $(E_\sigma)_{\frac{1}{16}|E_\sigma|}$.
By the supposition, we have $L_\sigma<n_1$. Thus, by Lemma \ref{pre6}, we deduce
\[
\int_{(E_\sigma)_{\frac{1}{16}|E_\sigma|}}d(x,\alpha_n)^rd\mu(x)\geq \mu\big((E_\sigma)_{\frac{1}{16}|E_\sigma|}\big)|(E_\sigma)_{\frac{1}{16}|E_\sigma|}|^re^r_{n_1-1,r}(\nu_{\sigma,2}).
\]
Note that, for $x\in (E_\sigma)_{\frac{1}{16}|E_\sigma|}$, we have $d(x,\beta)\leq d(x,h_{\sigma,2}(\beta_{n_1}(\nu_{\sigma,2})))$. Hence,
\begin{eqnarray}\label{s0}
\Delta_1(\alpha_n,\beta):&=&\int_{(E_\sigma)_{\frac{1}{16}|E_\sigma|}}d(x,\alpha_n)^r-d(x,\beta)^rd\mu(x)\nonumber\\
&\geq&\mu\big((E_\sigma)_{\frac{1}{16}|E_\sigma|}\big)|(E_\sigma)_{\frac{1}{16}|E_\sigma|}|^r(e^r_{n_1-1,r}(\nu_{\sigma,2})-e^r_{n_1,r}(\nu_{\sigma,2})).
\end{eqnarray}
On the other hand, we have
\begin{eqnarray}\label{s1}
\Delta_2(\alpha_n,\beta):&=&\int_{(A_\omega)_{\frac{1}{8}|A_\omega|}}d(x,\beta)^r-d(x,\alpha_n)^rd\mu(x)\nonumber
\\&\leq&\int_{(A_\omega)_{\frac{1}{8}|A_\omega|}}d(x,\beta)^rd\mu(x)\nonumber\\
&\leq&\mu((A_\omega)_{\frac{1}{8}|A_\omega|})|(A_\omega)_{\frac{1}{8}|A_\omega|}|^re^r_{n_2-k_1-n_1,r}(\nu_{\omega,3}).
\end{eqnarray}
By (\ref{z6}) and Lemma \ref{pre5}, we have $\Delta_2(\alpha_n,\beta)<\Delta_1(\alpha_n,\beta)$. This, together with (\ref{temp1}), implies
$I(\beta,\mu)<I(\alpha_n,\mu)$, contradicting the optimality of $\alpha_n$.

\emph{Case (1b)}: $G_\sigma=\emptyset$. In this case, let $\gamma_{L_\omega-k_1-n_1}(\nu_{\omega,3})\in C_{L_\omega-k_1-n_1,r}(\nu_{\omega,3})$ and define
\[
\beta:=\big(\alpha_n\setminus(A_\omega)_{\frac{1}{16}|A_\omega|}\big)\cup \beta_\omega\cup h_{\omega,3}(\gamma_{L_\omega-k_1-n_1}(\nu_{\omega,3}))\cup h_{\sigma,1}(\beta_{n_1}(\nu_{\sigma,1})).
\]
Then (\ref{temp1}) and (\ref{s1}) remain true. This allows us to focus on $(A_\omega)_{\frac{1}{8}|A_\omega|}$ and $E_\sigma$. Since $G_\sigma=\emptyset$ and $L_\sigma<n_1$, by Lemma \ref{pre6}, we have
\begin{eqnarray}\label{s2}
\Delta_3(\alpha_n,\beta):&=&\int_{E_\sigma}d(x,\alpha_n)^r-d(x,\beta)^rd\mu(x)\nonumber\\
&\geq&\mu(E_\sigma)|E_\sigma|^r(e^r_{n_1-1,r}(\nu_{\sigma,1})-e^r_{n_1,r}(\nu_{\sigma,1})).
\end{eqnarray}
By (\ref{z6}) and Lemma \ref{pre5}, we have $\Delta_2(\alpha_n,\beta)<\Delta_3(\alpha_n,\beta)$. This together with (\ref{temp1}) implies that $I(\alpha_n,\mu)>I(\beta,\mu)$, which contradicts the optimality of $\alpha_n$.

\emph{Case 2:} $(A_\omega)_{\frac{1}{8}|A_\omega|}\cap (A_\sigma)_{\frac{1}{16}|A_\sigma|}\neq\emptyset$. Let $\beta_\omega$ be the same as in Case (i).
Due to the set $\beta_\omega$, (\ref{temp1}) remains true.

\emph{Case (2a)}: $G_\sigma\neq\emptyset$. In this case, (\ref{s0}) remains true.
 We set
\begin{eqnarray*}
&&\gamma_{L_\omega-k_1-n_1}(\nu_{\omega,5})\in C_{L_\omega-k_1-n_1,r}(\nu_{\omega,5}),\;\beta_{n_1}(\nu_{\sigma,2})\in C_{n_1,r}(\nu_{\sigma,2});\\
&&\beta:=\big(\alpha_n\setminus(A_\omega)_{\frac{1}{16}|A_\omega|}\big)\cup \beta_\omega\cup h_{\omega,5}(\gamma_{L_\omega-k_1-n_1}(\nu_{\omega,5}))\cup h_{\sigma,2}(\beta_{n_1}(\nu_{\sigma,2})).
\end{eqnarray*}
For integrals over $(A_\omega)_{\frac{1}{8}|A_\omega|}\setminus(E_\sigma)_{\frac{1}{16}|E_\sigma|}$, we have
\begin{eqnarray*}
\Delta_4(\alpha_n,\beta):&=&\int_{(A_\omega)_{\frac{1}{8}|A_\omega|}\setminus(E_\sigma)_{\frac{1}{16}|E_\sigma|}}d(x,\beta)^r-d(x,\alpha_n)^rd\mu(x)
\nonumber
\\&\leq&\int_{(A_\omega)_{\frac{1}{8}|A_\omega|}\setminus(E_\sigma)_{\frac{1}{16}|E_\sigma|}}d(x,\beta)^rd\mu(x)\nonumber\\
&\leq&\mu\big((A_\omega)_{\frac{1}{8}|A_\omega|}\setminus(E_\sigma)_{\frac{1}{16}|E_\sigma|}\big)\big|(A_\omega)_{\frac{1}{8}|A_\omega|}
\setminus(E_\sigma)_{\frac{1}{16}|E_\sigma|}\big|^re^r_{n_2-n_1-k_1,r}(\nu_{\omega,5}).
\end{eqnarray*}
By (\ref{z6}), (\ref{s0}) and Lemma \ref{pre5}, we have $\Delta_1(\alpha_n,\beta)>\Delta_4(\alpha_n,\beta)$. Using this and (\ref{temp1}), we deduce that $I(\beta,\mu)<I(\alpha_n,\mu)$. This contradicts the optimality of $\alpha_n$.

\emph{Case (2b)}: $G_\sigma=\emptyset$. In this case, (\ref{temp1}) and (\ref{s2}) remain true. We set
\[
\beta:=\big(\alpha_n\setminus(A_\omega)_{\frac{1}{16}|A_\omega|}\big)\cup \beta_\omega\cup h_{\omega,6}(\gamma_{L_\omega-k_1-2n_1}(\nu_{\omega,6}))\cup h_{\sigma,1}(\beta_{n_1}(\nu_{\sigma,1})).
\]
Due to (\ref{temp1}), we focus on $(A_\omega)_{\frac{1}{8}|A_\omega|}\setminus E_\sigma$ and $E_\sigma$. We have
\begin{eqnarray*}
\Delta_5(\alpha_n,\beta):&=&\int_{(A_\omega)_{\frac{1}{8}|A_\omega|}\setminus E_\sigma}d(x,\beta)^r-d(x,\alpha_n)^rd\mu(x)\nonumber
\\&\leq&\int_{(A_\omega)_{\frac{1}{8}|A_\omega|}\setminus E_\sigma}d(x,\beta)^rd\mu(x)\nonumber\\
&\leq&\mu\big((A_\omega)_{\frac{1}{8}|A_\omega|}\setminus E_\sigma\big)\big|(A_\omega)_{\frac{1}{8}|A_\omega|}\setminus E_\sigma\big|^re^r_{n_2-n_1-k_1,r}(\nu_{\omega,6}).
\end{eqnarray*}
Using this, (\ref{z6}), (\ref{s2}) and Lemma \ref{pre5}, we deduce that $\Delta_5(\alpha_n,\beta)<\Delta_3(\alpha_n,\beta)$. Thus, by (\ref{temp1}), we have $I(\beta,\mu)<I(\alpha_n,\mu)$, contradicting the optimality of $\alpha_n$.
\end{proof}
\begin{corollary}\label{cor1}
Let $\sigma\in\Omega_k$ and $a\in\alpha_n$. If $P_a(\alpha_n)\cap A_\sigma\neq\emptyset$, then
\begin{eqnarray*}
d(a,c_\sigma)\leq\frac{13}{8}|A_\sigma|.
\end{eqnarray*}
\end{corollary}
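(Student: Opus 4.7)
The plan is to exploit Lemma \ref{key1}, which guarantees that every expanded ball $(A_\sigma)_{\frac{1}{16}|A_\sigma|}$ contains at least $n_1\geq 1$ points of $\alpha_n$. So I would fix an arbitrary $b\in\alpha_n\cap (A_\sigma)_{\frac{1}{16}|A_\sigma|}$ and use $b$ as a ``witness'' competitor against $a$ at any point $x\in P_a(\alpha_n)\cap A_\sigma$.

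The key inequalities are as follows. Since $A_\sigma=B(c_\sigma,2m^{-k})$ has radius $\tfrac12|A_\sigma|$, any $x\in A_\sigma$ satisfies $d(x,c_\sigma)\leq\tfrac12|A_\sigma|$, and any $b\in(A_\sigma)_{\frac{1}{16}|A_\sigma|}$ satisfies
\[
d(b,c_\sigma)\leq\tfrac12|A_\sigma|+\tfrac{1}{16}|A_\sigma|=\tfrac{9}{16}|A_\sigma|.
\]
By the triangle inequality, $d(x,b)\leq\tfrac12|A_\sigma|+\tfrac{9}{16}|A_\sigma|=\tfrac{17}{16}|A_\sigma|$. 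Now the defining property of a Voronoi partition gives $d(x,a)=d(x,\alpha_n)\leq d(x,b)$, and one more triangle inequality yields
\[
d(a,c_\sigma)\leq d(a,x)+d(x,c_\sigma)\leq\tfrac{17}{16}|A_\sigma|+\tfrac12|A_\sigma|=\tfrac{25}{16}|A_\sigma|\leq\tfrac{13}{8}|A_\sigma|,
\]
which is the desired bound.

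There is essentially no obstacle here; this is a short geometric consequence of Lemma \ref{key1} and the Voronoi property. The only thing to be careful about is that the radius of $A_\sigma$ equals $|E_\sigma|=2m^{-k}=\tfrac12|A_\sigma|$ (since $|\cdot|$ denotes diameter and a ball of radius $r$ in any norm has diameter $2r$), so the conversions between radii and the diameter quantity $|A_\sigma|$ must be done consistently. Once this bookkeeping is fixed, the proof is just two applications of the triangle inequality wrapped around the Voronoi inequality $d(x,a)\leq d(x,b)$.
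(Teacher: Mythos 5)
Your proof is correct and follows essentially the same route as the paper: Lemma \ref{key1} supplies a point $b\in\alpha_n\cap(A_\sigma)_{\frac{1}{16}|A_\sigma|}$, the Voronoi property gives $d(x,a)\leq d(x,b)$ for $x\in P_a(\alpha_n)\cap A_\sigma$, and a triangle inequality to $c_\sigma$ finishes the job. The only cosmetic difference is that you route through $c_\sigma$ to get the sharper intermediate bound $d(x,a)\leq\tfrac{17}{16}|A_\sigma|$, whereas the paper directly uses the diameter $\tfrac{9}{8}|A_\sigma|$ of $(A_\sigma)_{\frac{1}{16}|A_\sigma|}$; both comfortably yield $d(a,c_\sigma)\leq\tfrac{13}{8}|A_\sigma|$.
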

\begin{proof}
By the hypothesis, there exists an $x\in A_\sigma$ with $d(x,\alpha_n)=d(x,a)$. By Lemma \ref{key1}, we have $\alpha_n\cap(A_\sigma)_{\frac{1}{16}|A_\sigma|}\neq\emptyset$, implyng that $d(x,\alpha_n)\leq\frac{9}{8}|A_\sigma|$. Hence,
\begin{eqnarray*}
\frac{9}{8}|A_\sigma|\geq d(x,\alpha_n)=d(x,a)\geq d(a,c_\sigma)-d(x,c_\sigma)\geq d(a,c_\sigma)-\frac{1}{2}|A_\sigma|.
\end{eqnarray*}
It follows that $d(a,c_\sigma)\leq\frac{13}{8}|A_\sigma|$.
\end{proof}

For the proof of our main theorem, we need to establish an upper bound for the number of points $a$ in $\alpha_n$ such that $P_a(\alpha_n)\cap A_\sigma\cap K\neq\emptyset$. Write
\begin{equation}\label{g06}
\mathcal{M}_\sigma:=\{a\in\alpha_n:P_a(\alpha_n)\cap A_\sigma\cap K\neq\emptyset\};\;M_\sigma:={\rm card}(\mathcal{M}_\sigma).
\end{equation}
Then by Corollary \ref{cor1}, we have $\mathcal{M}_\sigma\subset (A_\sigma)_{\frac{9}{8}|A_\sigma|}$. It follows that
\begin{equation}\label{sgzhu1}
{\rm card}(\alpha_n\cap(A_\sigma)_{\frac{9}{8}|A_\sigma|})\geq M_\sigma.
\end{equation}
\begin{remark}
Note that, for distinct words $\sigma,\tau\in\Omega_k$, the balls $A_\sigma,A_\tau$ may be overlapping. Thus, little can be said about $L_\sigma$ even if $M_\omega$ is "excesively large". Fortunately,  $E_\sigma,\sigma\in\Omega_k$, are pairwise disjoint. Hence, if $M_\omega$ is "too large" for some $\omega\in\Omega_k$, then as we will see, ${\rm card}(\alpha_n\cap E_\sigma)$ would be "too small". We will use this fact to give an upper bound for $M_\sigma$.
\end{remark}
\begin{lemma}\label{key2}
For every $\sigma\in\Omega_k$, we have $M_\sigma\leq n_3$.
\end{lemma}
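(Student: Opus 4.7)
The plan is proof by contradiction via a swap argument in the spirit of Lemma \ref{key1}: assume $M_\sigma>n_3$ for some $\sigma\in\Omega_k$ and build $\beta$ with ${\rm card}(\beta)\le n$ and $I(\beta,\mu)<I(\alpha_n,\mu)$, contradicting the optimality of $\alpha_n$. By (\ref{sgzhu1}) and Corollary \ref{cor1}, the set $R:=\alpha_n\cap(A_\sigma)_{\frac{9}{8}|A_\sigma|}$ satisfies ${\rm card}(R)\ge M_\sigma>n_3$. A volume argument---which is the very purpose of the constant $k_3=[(35/4)^q]$---gives that at most $k_3$ of the pairwise disjoint sets $\{E_\tau\}_{\tau\in\Omega_k}$ meet $(A_\sigma)_{\frac{19}{16}|A_\sigma|}$, since disjoint $E_\tau$'s (each of radius $m^{-k}$) meeting this neighborhood all fit into a ball of radius $\frac{35}{4}m^{-k}$.

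I would take
\[
\beta:=(\alpha_n\setminus R)\cup\beta_\sigma^{(k_2)}\cup h_{\sigma,7}\big(\gamma_{n_3-n_4}(\nu_{\sigma,7})\big),
\]
where $\beta_\sigma^{(k_2)}$ consists of the $k_2$ centers of balls of radius $\frac{1}{32}|A_\sigma|$ covering $(A_\sigma)_{\frac{19}{16}|A_\sigma|}$ and $\gamma_{n_3-n_4}(\nu_{\sigma,7})\in C_{n_3-n_4,r}(\nu_{\sigma,7})$. Since $n_4=N(n_0+n_2)k_3+k_2$ with $k_3\ge 1$, one has ${\rm card}(R)>n_3\ge k_2+(n_3-n_4)$, so ${\rm card}(\beta)\le n$. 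For the integral on $(A_\sigma)_{\frac{19}{16}|A_\sigma|}\cap K$, change of variables through $h_{\sigma,7}$ combined with Lemma \ref{pre5}(3) produces
\[
\int_{(A_\sigma)_{\frac{19}{16}|A_\sigma|}}d(x,\beta)^r\,d\mu<\Big(\frac{7C_1}{54C_2}\Big)^{s_0+r}\mu\big((A_\sigma)_{\frac{19}{16}|A_\sigma|}\big)\big|(A_\sigma)_{\frac{19}{16}|A_\sigma|}\big|^r\zeta_{N(n_0+n_2),r}.
\]

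For $x\in K\setminus(A_\sigma)_{\frac{19}{16}|A_\sigma|}$ whose closest $\alpha_n$-neighbor $a$ lies outside $R$, trivially $d(x,\beta)\le d(x,\alpha_n)$; for $a\in R$ the cover yields $b\in\beta_\sigma^{(k_2)}$ with $d(a,b)\le\frac{1}{32}|A_\sigma|$ while $d(x,a)>\frac{1}{16}|A_\sigma|$, so $d(x,\beta)\le\frac{3}{2}d(x,\alpha_n)$, a controlled outside overhead. The hard part will be producing a matching lower bound on $\int_{(A_\sigma)_{\frac{19}{16}|A_\sigma|}}d(x,\alpha_n)^r\,d\mu$ that dominates the upper bound above together with this outside overhead. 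My plan is a pigeonhole on the $\le k_3$ admissible $E_\tau$'s plus an application of Lemma \ref{pre6}: once I thin $\alpha_n$ down to at most $N(n_0+n_2)$ of the clustered points near each such $\tau$, at least one $\tau$ has its $(A_\tau)_{\frac{1}{16}|A_\tau|}$-population drop below $n_1$, so Lemma \ref{pre6} delivers a contribution of order $\mu(D_\tau)|D_\tau|^r\zeta_{n_1,r}$, which via (\ref{z6}) and Lemma \ref{pre5}(1) beats the upper estimate. The whole calibration of $n_3,n_4,k_2,k_3,N,n_0,n_2$ is set up precisely to make this final accounting close.
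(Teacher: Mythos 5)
There is a genuine gap, and it is structural. Your construction of $\beta$ only rearranges $\alpha_n$ locally near $\sigma$: you delete the cluster $R$ and reinsert a smaller number of well-placed points back into $(A_\sigma)_{\frac{19}{16}|A_\sigma|}$. But since ${\rm card}(R)>n_3 \gg n_3-n_4+k_2$, your $\beta$ has \emph{strictly fewer} points near $\sigma$ than $\alpha_n$ does, so there is no reason whatever that $\int_{(A_\sigma)_{\frac{19}{16}|A_\sigma|}}d(x,\beta)^r\,d\mu$ should be smaller than $\int_{(A_\sigma)_{\frac{19}{16}|A_\sigma|}}d(x,\alpha_n)^r\,d\mu$; generically it will be larger. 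The lower bound you say you will produce on $\int_{(A_\sigma)_{\frac{19}{16}|A_\sigma|}}d(x,\alpha_n)^r\,d\mu$ simply does not exist: the hypothesis $M_\sigma>n_3$ is exactly the statement that $\alpha_n$ places a huge number of points near $\sigma$, which is free to drive that integral as close to zero as it likes. "Thinning $\alpha_n$" as you propose only gives an \emph{upper} bound on that integral, not a lower one, so Lemma~\ref{pre6} cannot be applied in the direction you need. This is why no purely local swap near $\sigma$ can ever contradict the optimality of $\alpha_n$: you are deleting points from a possibly well-served region and adding nothing anywhere else, so $I(\beta,\mu)\ge I(\alpha_n,\mu)$ may well hold.

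The idea you are missing, and the one the paper's proof turns on, is that the excess concentration at $\sigma$ must be traded against a \emph{far-away deficient ball}. The paper uses the counting $n<(n_0+n_2)\phi_{k+1}\le N(n_0+n_2)\phi_k$, the bound $M_\omega>n_3\ge n_4+1 = N(n_0+n_2)k_3+k_2+1$, the fact that at most $k_3$ of the pairwise disjoint $E_\tau$ meet $(A_\omega)_{\frac{19}{16}|A_\omega|}$, and the pairwise disjointness of all the $E_\tau$, to pigeonhole a $\sigma'\in\Omega_k$ with $E_{\sigma'}\cap(A_\omega)_{\frac{19}{16}|A_\omega|}=\emptyset$ and ${\rm card}(\alpha_n\cap E_{\sigma'})<N(n_0+n_2)$. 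The competitor $\beta$ then removes points near $\omega$ (cost controlled by Lemma~\ref{pre5}(3) exactly as in your upper estimate) \emph{and injects} $N(n_0+n_2)$ new optimal-for-$\nu_{\sigma',\cdot}$ points into $E_{\sigma'}$; the strict gain in $E_{\sigma'}$ comes from the error-difference lower bound $\zeta_{N(n_0+n_2),r}$ of Lemma~\ref{pre2}/\ref{pre5}(1), transferred via (\ref{z6}). That gain is what beats the loss near $\omega$. Your $\beta$ has no such injection and no such gain term, so the contradiction cannot close. Relatedly, your $\frac{3}{2}$ "outside overhead" is a further (secondary) liability: it multiplies a portion of the bulk error, and to neutralize it you would need a compensating gain elsewhere, which again forces the non-local trade that your construction omits.
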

\begin{proof}
Suppose that $M_\omega>n_3$ for some $\omega\in\Omega_k$. We deduce a contradiction. Set
 \[
 \mathcal{N}_\omega:=\{\tau\in\Omega_k: E_\tau\cap (A_\omega)_{(\frac{9}{8}+\frac{1}{16})|A_\omega|}\neq\emptyset\};\;\;N_\omega:={\rm card}(\mathcal{N}_\omega).
 \]
By estimating volumes, we have $N_\omega\leq [(\frac{35}{4})^q]=k_3$. Since $n_3>n_4$, we deduce
\begin{eqnarray*}
n-M_\omega&<&(n_0+n_2)\phi_{k+1}-n_3\\&\leq&N(n_0+n_2)\phi_k-k_3N(n_0+n_2)\\&\leq& N(n_0+n_2)(\phi_k-N_\omega).
\end{eqnarray*}
Since $E_\tau,\tau\in\Omega_k$, are pairwise disjoint, there exists some $\sigma\in\Omega_k$ such that
\[
E_\sigma\cap(A_\omega)_{\frac{19}{16}|A_\omega|}=\emptyset,\;\;{\rm card}(\alpha_n\cap E_\sigma)<N(n_0+n_2).
\]
Let $\widetilde{\beta}_\omega$ be the centers of $k_2$ closed balls of radius $\frac{1}{32}|A_\omega|$ which are centered in $(A_\omega)_{(\frac{9}{8}+\frac{1}{16})|A_\omega|}$ and cover $(A_\omega)_{(\frac{9}{8}+\frac{1}{16})|A_\omega|}$. We need to distinguish two cases. Set
\[
H_\sigma:=\{x\in D_\sigma\cap K: d(x,\alpha_n)=d(x,\alpha_n\setminus E_\sigma)\}.
\]

\emph{Case 1}: $H_\sigma\neq\emptyset$. In this case, we set
\begin{eqnarray*}
&\gamma_{M_\omega-n_4}(\nu_{\omega,7})\in C_{M_\omega-n_4,r}(\nu_{\omega,7}),\;\beta_{N(n_0+n_2)}(\nu_{\sigma,1})\in C_{N(n_0+n_2),r}(\nu_{\sigma,1});\\
&\beta:=\big(\alpha_n\setminus(A_\omega)_{\frac{9}{8}|A_\omega|}\big)\cup \widetilde{\beta}_\omega\cup h_{\omega,7}(\gamma_{M_\omega-n_4}(\nu_{\omega,7}))\cup h_{\sigma,1}(\beta_{N(n_0+n_2)}(\nu_{\sigma,1})).
\end{eqnarray*}
Then by (\ref{sgzhu1}), we have ${\rm card}(\beta)\leq n$. Due to the set $\widetilde{\beta}_\omega$, we have
\begin{equation}\label{g04}
d(x,\beta)\leq d(x,\alpha_n)\;\;{\rm for\;every}\;\;x\in K\setminus(A_\omega)_{\frac{19}{16}|A_\omega|}.
\end{equation}
Hence, we may focus on $(A_\omega)_{\frac{19}{16}|A_\omega|}$ and $E_\sigma$. Since $H_\sigma\neq\emptyset$, we choose an $x_0\in D_\sigma\cap K$ such that $d(x_0,\alpha_n)>\frac{1}{16}|E_\sigma|$. Note that $B(x_0,\frac{1}{16}|E_\sigma|)\subset E_\sigma$. Since $|K_B|\leq 1$ for $B=E_\sigma$, by (\ref{s6}), we have
\begin{eqnarray*}
\int_{E_\sigma}d(x,\alpha_n)^rd\mu(x)&=&\mu(E_\sigma)|E_\sigma|^r\int d(x,h^{-1}_{\sigma,1}(\alpha_n))^rd\nu_{\sigma,1}(x)\\
&\geq&\mu(E_\sigma)|E_\sigma|^r\int_{h^{-1}_{\sigma,1}(B(x_0,\frac{1}{16}|E_\sigma|))} d(x,h^{-1}_{\sigma,1}(\alpha_n))^rd\nu_{\sigma,1}(x)\\
&\geq&\mu(E_\sigma)|E_\sigma|^rC_1C_2^{-1}\frac{1}{(16)^{s_0}}\frac{1}{(16)^r}
\\&>&\mu(E_\sigma)|E_\sigma|^re^r_{n_1-1,r}(\nu_{\sigma,1}).
\end{eqnarray*}
Note that $d(x,\beta)\leq d(x, h_{\sigma,1}(\beta_{N(n_0+n_2)}(\nu_{\sigma,1})))$ for $x\in E_\sigma$. We further deduce
\begin{eqnarray*}
\Delta_6(\beta,\alpha_n):&=&\int_{E_\sigma}d(x,\alpha_n)^r-d(x,\beta)^rd\mu(x)\\
&\geq&\mu(E_\sigma)|E_\sigma|^r(e^r_{n_1-1,r}(\nu_{\sigma,1})-e^r_{N(n_0+n_2),r}(\nu_{\sigma,1}))\\
&>&\mu(E_\sigma)|E_\sigma|^r(e^r_{N(n_0+n_2)-1,r}(\nu_{\sigma,1})-e^r_{N(n_0+n_2),r}(\nu_{\sigma,1})).
\end{eqnarray*}
For the integrals over $(A_\omega)_{\frac{19}{16}|A_\omega|}$, we have
\begin{eqnarray}\label{s7}
\Delta_7(\beta,\alpha_n):&=&\int_{(A_\omega)_{\frac{19}{16}|A_\omega|}}d(x,\beta)^r-d(x,\alpha_n)^rd\mu(x)\leq
\int_{(A_\omega)_{\frac{19}{16}|A_\omega|}}d(x,\beta)^rd\mu(x)\nonumber\\
&\leq&\int_{(A_\omega)_{\frac{19}{16}|A_\omega|}}d(x,h_{\omega,7}(\gamma_{M_\omega-n_4}(\nu_{\omega,7})))^rd\mu(x)\nonumber\\
&=&\mu\big((A_\omega)_{\frac{19}{16}|A_\omega|}\big)\big|(A_\omega)_{\frac{19}{16}|A_\omega|}\big|^re^r_{M_\omega-n_4,r}(\nu_{\omega,7}).
\end{eqnarray}
Thus, by (\ref{z6}) and Lemma \ref{pre5}, we have $\Delta_6(\alpha_n,\beta)>\Delta_7(\alpha_n,\beta)$. This, together with (\ref{g04}), yields that $I(\beta,\mu)<I(\alpha_n,\mu)$, which contradicts the optimality of $\alpha_n$.

\emph{Case 2}: $H_\sigma=\emptyset$. Let $\widetilde{\beta}_\omega$ be the same as in Case 1. We set
\begin{eqnarray*}
&\gamma_{M_\omega-n_4}(\nu_{\omega,7})\in C_{M_\omega-n_4,r}(\nu_{\omega,7}),\;\beta_{N(n_0+n_2)}(\nu_{\sigma,4})\in C_{N(n_0+n_2),r}(\nu_{\sigma,4});\\
&\beta:=\big(\alpha_n\setminus(A_\omega)_{\frac{1}{16}|A_\omega|}\big)\cup \widetilde{\beta}_\omega\cup h_{\omega,7}(\gamma_{M_\omega-n_4}(\nu_{\omega,7}))\cup h_{\sigma,4}(\beta_{N(n_0+n_2)}(\nu_{\sigma,4})).
\end{eqnarray*}
Due to the set $\widetilde{\beta}_\omega$, (\ref{g04}) remains true. We focus on $(A_\omega)_{\frac{19}{16}|A_\omega|}$ and $D_\sigma$. We have
\begin{eqnarray*}
\Delta_8(\beta,\alpha_n):&=&\int_{D_\sigma}d(x,\alpha_n)^r-d(x,\beta)^rd\mu(x)\\
&\geq&\mu(D_\sigma)|D_\sigma|^r(e^r_{N(n_0+n_2)-1,r}(\nu_{\sigma,4})-e^r_{N(n_0+n_2),r}(\nu_{\sigma,4})).
\end{eqnarray*}
 Note that (\ref{s7}) remains true. Hence, by (\ref{z6}) and Lemma \ref{pre5}, we deduce that $\Delta_8(\alpha_n,\beta)>\Delta_7(\alpha_n,\beta)$. So, by (\ref{g04}), it follows that $I(\beta,\mu)<I(\alpha_n,\mu)$, contradicting the optimality of $\alpha_n$. The proof of the lemma is complete.
\end{proof}
\section{Proof of Theorem \ref{mthm}}
Let $(n_0+n_2)\phi_k\leq n<(n_0+n_2)\phi_{k+1}$. Next, we give an upper estimate for $\overline{J}(\alpha_n,\mu)$.
\begin{lemma}\label{pf1}
There exists a constant $C_3>0$ such that for every $a\in\alpha_n$, we have
\[
I_a(\alpha_n,\mu)\leq C_3 m^{-k(s_0+r)}.
\]
\end{lemma}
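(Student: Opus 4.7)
The plan is to exploit the geometric localization provided by Lemma \ref{key1} and Corollary \ref{cor1} together with the Ahlfors--David volume estimates. Since $\mu$ is supported on $K\subset\bigcup_{\sigma\in\Omega_k}A_\sigma$,
\[
I_a(\alpha_n,\mu)\leq\sum_{\sigma\in\Omega_k(a)}\int_{P_a(\alpha_n)\cap A_\sigma}d(x,\alpha_n)^rd\mu(x),
\]
where $\Omega_k(a):=\{\sigma\in\Omega_k:P_a(\alpha_n)\cap A_\sigma\cap K\neq\emptyset\}$. The proof then reduces to two uniform estimates: (i) each summand is of order $m^{-k(s_0+r)}$; and (ii) $\mathrm{card}(\Omega_k(a))$ is bounded independently of $n$, $k$, and $a$.

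For (i), if $x\in P_a(\alpha_n)\cap A_\sigma\cap K$, then Lemma \ref{key1} produces some $a'\in\alpha_n\cap(A_\sigma)_{\frac{1}{16}|A_\sigma|}$, whence $d(a',c_\sigma)\leq\frac{9}{16}|A_\sigma|$. The triangle inequality then gives $d(x,a)=d(x,\alpha_n)\leq d(x,a')\leq\frac{1}{2}|A_\sigma|+\frac{9}{16}|A_\sigma|=\frac{17}{16}|A_\sigma|$. Combined with $\mu(A_\sigma)\leq C_2(2m^{-k})^{s_0}$ (from (\ref{dameasure}), since $c_\sigma\in K$ and $2m^{-k}<\epsilon_0$ for $k\geq k_0$), each summand is at most $(\frac{17}{4})^r 2^{s_0}C_2\,m^{-k(s_0+r)}$.

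For (ii), I would invoke Corollary \ref{cor1}: every $\sigma\in\Omega_k(a)$ satisfies $d(a,c_\sigma)\leq\frac{13}{8}|A_\sigma|=\frac{13}{2}m^{-k}$, hence $E_\sigma\subset B(a,\frac{15}{2}m^{-k})$. Since $\{E_\sigma\}_{\sigma\in\Omega_k}$ are pairwise disjoint and each carries $\mu$-mass at least $C_1m^{-ks_0}$ by (\ref{dameasure}), while $\mu(B(a,\frac{15}{2}m^{-k}))\leq C_2(\frac{15}{2})^{s_0}m^{-ks_0}$ by (\ref{s5}), summing and dividing yields $\mathrm{card}(\Omega_k(a))\leq C_2C_1^{-1}(\frac{15}{2})^{s_0}$, a constant depending only on $C_1,C_2,s_0$.

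Multiplying the bounds from (i) and (ii) gives the desired $I_a(\alpha_n,\mu)\leq C_3m^{-k(s_0+r)}$. I do not anticipate any serious obstacle here: the substantive geometric work has already been absorbed into Lemma \ref{key1} and Corollary \ref{cor1}, and what remains is a routine packaging via the two Ahlfors--David inequalities. The one point requiring care is to use the \emph{disjointness of the $E_\sigma$'s} (rather than the possibly overlapping $A_\sigma$'s) in the multiplicity count of (ii); this is exactly the distinction flagged in the remark preceding Lemma \ref{key2}.
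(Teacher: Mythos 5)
Your proposal is correct and matches the paper's own argument in all essentials: decompose $I_a(\alpha_n,\mu)$ over the balls $A_\sigma$ meeting $P_a(\alpha_n)\cap K$, bound each summand by $\asymp|A_\sigma|^r\mu(A_\sigma)$ via Lemma~\ref{key1}, and bound the multiplicity via Corollary~\ref{cor1} together with the pairwise disjointness of the $E_\sigma$'s. The one cosmetic difference is the multiplicity count: the paper packs the disjoint $E_\sigma$'s by Euclidean volume, giving $N_a\leq(\tfrac{15}{2})^q$, whereas you compare $\mu$-masses through the two Ahlfors--David inequalities, giving $\mathrm{card}(\Omega_k(a))\leq C_2C_1^{-1}(\tfrac{15}{2})^{s_0}$; both are valid and only change the explicit value of $C_3$.
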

\begin{proof}
Let $a$ be an arbitrary point in $\alpha_n$. As above, we denote by $c_\sigma$ the center of $A_\sigma$ for $\sigma\in\Omega_k$. By Corollary \ref{cor1}, for every $\sigma\in\Omega_k$ with $A_\sigma\cap P_a(\alpha_n)\neq \emptyset$, we have $d(a,c_\sigma)\leq \frac{13}{8}|A_\sigma|$.
We write
\[
\Gamma_k(a):=\{\tau\in\Omega_k: d(a,c_\tau)\leq\frac{13}{8}|A_\tau|\};\;N_a:={\rm card}(\Gamma_k(a)).
\]
Then we have $P_a(\alpha_n)\cap K\subset\bigcup_{\tau\in\Gamma_k(a)}A_\sigma$. Note that $E_\tau$ and $A_\tau$ share the same center, and $E_\tau,\tau\in\Omega_k$, are pairwise disjoint. By estimating Volumes,
\[
\big(\frac{13}{4}+\frac{1}{2})|A_\sigma|\big)^q\geq N_a\big(\frac{1}{2}|A_\sigma|\big)^q.
\]
It follows that $N_a\leq\big(\frac{15}{2}\big)^q$. On the other hand, by Lemma \ref{key1}, we know that $\alpha_n\cap (A_\tau)_{\frac{1}{16}|A_\tau|}\neq\emptyset$ for all $\tau\in\Omega_k$. Thus, for very $x\in P_a(\alpha_n)\cap A_\tau$, we have $d(x,a)\leq d(x,\alpha_n)\leq\frac{9}{8}|A_\tau|$. We further deduce
\begin{eqnarray*}
I_a(\alpha_n,\mu)&=&\int_{P_a(\alpha_n)}d(x,a)^rd\mu(x)\\
&\leq&\sum_{\tau\in\Gamma_k(a)}\int_{P_a(\alpha_n)\cap A_\tau}d(x,a)^rd\mu(x)\\&\leq&\sum_{\tau\in\Gamma_k(a)}\mu(A_\tau\cap P_a(\alpha_n))\big(\frac{9}{8}\big)^r |A_\tau|^r\\
&\leq&\big(\frac{9}{8}\big)^r \sum_{\tau\in\Gamma_k(a)}\mu(A_\tau)|A_\tau|^r\\&\leq&\big(\frac{9}{8}\big)^r \big(\frac{15}{2}\big)^q C_2m^{-k(s_0+r)}2^r.
\end{eqnarray*}
The lemma follows by setting $C_3:=C_22^{s_0+r}\big(\frac{9}{8}\big)^r\big(\frac{15}{2}\big)^q$.
\end{proof}

Next, we give a lower estimate for $\underline{J}(\alpha_n,\mu)$.
\begin{lemma}\label{pf2}
There exists a constant $C_6>0$ such that for every $a\in\alpha_n$, we have
\[
I_a(\alpha_n,\mu)\geq C_6 m^{-k(s_0+r)}.
\]
\end{lemma}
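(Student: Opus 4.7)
The plan is to localize the problem: for a suitably chosen $\sigma\in\Omega_k$ with $a\in\mathcal{M}_\sigma$, I would reduce the lower bound on $I_a(\alpha_n,\mu)$ to an application of Lemma \ref{microapp} on a rescaled probability obtained by normalizing and scaling $\mu$ restricted to $K\cap U_\sigma$, where $U_\sigma:=\bigcup_{b\in\mathcal{M}_\sigma}P_b(\alpha_n)$. Since $\alpha_n$ is optimal, \cite[Theorem 4.1]{GL:00} gives $\mu(P_a(\alpha_n))>0$ and hence $P_a(\alpha_n)\cap K\neq\emptyset$; as $K\subset\bigcup_{\tau\in\Omega_k}A_\tau$, some such $\sigma$ exists, and by Lemma \ref{key2}, $M_\sigma\leq n_3$. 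For every $x\in K\cap U_\sigma$ the closest point of $\alpha_n$ lies in $\mathcal{M}_\sigma$, so $d(x,\alpha_n)=d(x,\mathcal{M}_\sigma)$ and $P_a(\alpha_n)=P_a(\mathcal{M}_\sigma)\cap U_\sigma$.

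The key step is to use the optimality of $\alpha_n$ to show that $\mathcal{M}_\sigma$ attains $\inf\{\int_{K\cap U_\sigma}d(x,\gamma)^r\,d\mu:\gamma\subset\mathbb{R}^q,\,|\gamma|=M_\sigma\}$. Indeed, were there a strictly better $\gamma$, then $\alpha':=(\alpha_n\setminus\mathcal{M}_\sigma)\cup\gamma$ would have cardinality at most $n$; since $\alpha_n\setminus\mathcal{M}_\sigma\subset\alpha'$ implies $d(x,\alpha')\leq d(x,\alpha_n)$ on $K\setminus U_\sigma$, one would obtain $I(\alpha',\mu)<e^r_{n,r}(\mu)$, contradicting optimality.

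To apply Lemma \ref{microapp}, I would next bound $\mathrm{diam}(K\cap U_\sigma)\leq C_\ast m^{-k}$ for some universal $C_\ast$: for $x\in K\cap U_\sigma\cap A_\tau$ and $b\in\mathcal{M}_\sigma$ with $x\in P_b(\alpha_n)$, Lemma \ref{key1} yields $d(x,b)\leq\tfrac{9}{8}|A_\tau|$, hence $d(b,c_\tau)\leq\tfrac{13}{8}|A_\tau|$, and combined with the bound $d(b,c_\sigma)\leq\tfrac{13}{8}|A_\sigma|$ from Corollary \ref{cor1} this forces $d(x,c_\sigma)=O(m^{-k})$. Letting $h$ be a similitude of ratio $C_\ast m^{-k}$, set $\tilde\nu:=\mu(\cdot\,|\,K\cap U_\sigma)\circ h$, a Borel probability with $|K_{\tilde\nu}|\leq 1$. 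Using \eqref{dameasure} together with $\mu(U_\sigma)\geq\mu(D_\sigma)\geq C_1(\tfrac{7}{8})^{s_0}m^{-ks_0}$ shows $\sup_x\tilde\nu(B(x,\epsilon))\leq C\epsilon^{s_0}$ with $C$ independent of $\sigma$ and $k$. By the optimality claim of the previous paragraph and the invariance of optimality under normalization and similarity, $h^{-1}(\mathcal{M}_\sigma)\in C_{M_\sigma,r}(\tilde\nu)$; Lemma \ref{microapp} then gives $I_{h^{-1}(a)}(h^{-1}(\mathcal{M}_\sigma),\tilde\nu)\geq d_{M_\sigma}\geq d^\ast:=\min_{1\leq j\leq n_3}d_j>0$. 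Unwinding the rescaling yields $I_a(\alpha_n,\mu)=\mu(U_\sigma)(C_\ast m^{-k})^r\,I_{h^{-1}(a)}(h^{-1}(\mathcal{M}_\sigma),\tilde\nu)\geq C_6\,m^{-k(s_0+r)}$ with $C_6:=C_1(\tfrac{7}{8})^{s_0}C_\ast^r d^\ast$. The main obstacle is the optimality-transfer step: competitors $\gamma$ may a priori place points anywhere in $\mathbb{R}^q$, and one must confirm that replacing $\mathcal{M}_\sigma$ by $\gamma$ does not worsen the integral on $K\setminus U_\sigma$; this is settled by the observation that $\alpha_n\setminus\mathcal{M}_\sigma\subset\alpha'$ already covers that region.
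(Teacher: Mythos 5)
Your proposal is correct and follows essentially the same route as the paper: both localize to a bounded collection of Voronoi cells whose measure is comparable to $\mu(A_\sigma)$ (you use $U_\sigma=\bigcup_{b\in\mathcal{M}_\sigma}P_b(\alpha_n)$ for a single $\sigma$ with $a\in\mathcal{M}_\sigma$, the paper uses the slightly larger $\widetilde G_a=A_{\sigma^0}\cup\bigcup_{\sigma\in S_a}\bigcup_{b\in\mathcal{M}_\sigma}P_b(\alpha_n)$), then invoke the fact that an optimal set restricted to a union of its Voronoi cells is optimal for the corresponding conditional measure, rescale, and apply Lemma \ref{microapp} together with the cardinality bound from Lemma \ref{key2}. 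The only cosmetic differences are that you spell out the optimality-transfer argument by hand where the paper cites \cite[Theorem 4.1]{GL:00} directly (you cite it for positivity of $\mu(P_a(\alpha_n))$, where the paper uses Theorem 4.11), and you rescale by a fixed multiple of $m^{-k}$ rather than by $|\widetilde G_a|$.
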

\begin{proof}
Let $a$ be an arbitrary point in $\alpha_n$. We write
\[
S_a:=\{\sigma\in\Omega_k:A_\sigma\cap P_a(\alpha_n)\cap K\neq\emptyset\}.
\]
By the proof of Lemma \ref{pf1}, we have ${\rm card}(S_a)\leq N_a\leq\big(\frac{15}{2}\big)^q$. Let
$\mathcal{M}_\sigma$ be as defined in (\ref{g06}).
By Lemma \ref{key2}, we have ${\rm card}(\mathcal{M}_\sigma)\leq n_3$. Now we choose an arbitrary $\sigma^0\in S_a$ and define
\begin{eqnarray*}
\beta(a):=\{a\}\cup\bigg(\bigcup_{\sigma\in S_a}\mathcal{M}_\sigma\bigg),\;\;G_a:=\{x\in K: d(x,\beta(a))=d(x,\alpha_n)\};\;\;\widetilde{G}_a:=A_{\sigma^0}\cup(G_a\cap K).
\end{eqnarray*}
Then we have $t_a:={\rm card}(\beta(a))\leq n_3\big(\frac{15}{2}\big)^q$ and
\begin{equation}\label{gatilde}
G_a=\bigcup_{b\in\beta(a)}P_b(\alpha_n),\;\;\mu(\cdot|G_a)=\mu(\cdot|\widetilde{G}_a).
\end{equation}
Also, by Corollary \ref{cor1}, we have
\begin{equation}\label{gadiameter}
|A_{\sigma^0}|\leq|\widetilde{G}_a|\leq6\cdot\frac{13}{8}\cdot |A_\sigma|=\frac{39}{4}|A_{\sigma^0}|.
\end{equation}
Note that $G_a$ contains the closed ball $A_{\sigma^0}$ and $\widetilde{G}_a$ is contained in the union of $t_a$ such balls. Hence, there exists a constant $C_4>0$ such that
\[
C_4|\widetilde{G}_a|^{s_0}\leq\mu(\widetilde{G}_a)\leq C_4^{-1}|\widetilde{G}_a|^{s_0}.
\]
Let $\lambda_{\widetilde{G}_a}$ be defined as we did for $\lambda_B$ in (\ref{z7}) and $K_{\widetilde{G}_a}$ its support. Using the same argument as in the proof of Lemma \ref{pre3}, one can find a constant $C_5>0$ such that
\begin{equation}\label{ga}
\sup_{x\in \mathbb{R}^q}\lambda_{\widetilde{G}_a}(B(x,\epsilon))\leq C_5\epsilon^{s_0}.
\end{equation}
According to Theorem 4.1 of \cite{GL:00}, we know that
\[
\beta(a)\in C_{t_a,r}\big(\mu(\cdot|G_a)\big)= C_{t_a,r}\big(\mu(\cdot|\widetilde{G}_a)\big).
\]
Set $\underline{d}:=\min\{d_h:1\leq h\leq n_3\big(\frac{15}{2}\big)^q\}$. Using Lemma \ref{microapp}, we deduce
\begin{eqnarray*}
I_a(\alpha_n,\mu)\geq d_{t_a}\mu(\widetilde{G}_a)|\widetilde{G}_a|^r\geq d_{t_a}C_1m^{-k(s_0+r)}\geq \underline{d} C_1m^{-k(s_0+r)}.
\end{eqnarray*}
The proof of the lemma is complete by setting $C_6:=\underline{d}C_1$.
\end{proof}

\begin{remark}
The measure $\lambda_{\widetilde{G}_a}$ is an amplification of the conditional measure $\mu(\cdot|G_a)$ (cf. (\ref{z7}) and (\ref{gatilde})). The set $A_\sigma$ in the definition of $\widetilde{G}_a=A_\sigma\cup (G_a\cap K)$ is used to guarantee the size of the amplification, so that we can obtain the lower bound for $I_a(\alpha_n,\mu)$ in terms of $m^{-k(s_0+r)}$.
\end{remark}
For two number sequences $(a_n)_{n=1}^\infty$ and $(b_n)_{n=1}^\infty$, we write
$a_n\lesssim b_n$ ($a_n\gtrsim b_n$) if there exists some constant $C$ such that $a_n\leq Cb_n$ ($a_n\geq Cb_n$) for all $n\geq 1$. With the above preparations, we are now able to prove our main result.

\emph{Proof of Theorem \ref{mthm}}

Let $a$ be an arbitrary point of $\alpha_n$. By Lemmas \ref{pf1} and \ref{pf2}, we have
\[
nC_6C_3^{-1} I_a(\mu,\alpha_n)\leq e^r_{n,r}(\mu)=\sum_{b\in\alpha_n}I_b(\alpha_n,\mu)\leq nC_3C_6^{-1} I_a(\mu,\alpha_n).
\]
This implies that, for every $a\in\alpha_n$, we have $I_a(\alpha_n,\mu)\asymp\frac{1}{n}e^r_{n,r}(\mu)$. It follows that
\[
\underline{J}(\alpha_n,\mu),\;\overline{J}(\alpha_n,\mu)\asymp \frac{1}{n}e^r_{n,r}(\mu).
\]

Next, we show the remaining part of the theorem. Let $\alpha_{n+1}\in C_{n+1,r}(\mu)$ with
\begin{equation}\label{g05}
(n_0+n_2)\phi_k\leq n+1<(n_0+n_2)\phi_{k+1}.
 \end{equation}
For $a\in\alpha_{n+1}$, let $\widetilde{G}_a$ and $t_a$ be the same as in the proof of Lemma \ref{pf2}. There exists some $A_\sigma$ such that $P_a(\alpha_{n+1})\cap A_\sigma\cap K\neq\emptyset$. Due to (\ref{g05}), there exists some $\sigma\in\Omega_k$ with $M_\sigma>1$ (cf. (\ref{g06})). So, we may choose $a\in\alpha_{n+1}$ such that $t_a>1$. Let $\lambda_{\widetilde{G}_a},h_{\widetilde{G}_a}$ be defined in the same manner as we did for $\lambda_B,h_B$. Set
\[
\gamma_{t_a-1}\in C_{t_a-1,r}(\lambda_{\widetilde{G}_a}),\;\beta_n:=(\alpha_{n+1}\setminus \beta(a))\cup {h_{\widetilde{G}_a}(\gamma_{t_a-1})}.
\]
Then we have ${\rm card}(\beta_{n+1})\leq n$. Hence,
$e^r_{n,r}(\mu)\leq I(\beta,\mu)$.
For $x\in K\setminus \widetilde{G}_a$, we have $x\notin\bigcup_{b\in \beta(a)}P_b(\alpha_{n+1})$. Since $\alpha_{n+1}\setminus \beta(a)\subset\beta_n$, we have
$d(x,\beta)\leq d(x,\alpha_{n+1})$. Thus,
\begin{equation}\label{z2}
\int_{K\setminus \widetilde{G}_a}d(x,\beta)^rd\mu(x)\leq\int_{K\setminus \widetilde{G}_a}d(x,\alpha_{n+1})^rd\mu(x).
\end{equation}
Write $\Delta_{n,1}(\mu):=e^r_{n,r}(\mu)-e^r_{n+1,r}(\mu)$. Note that $|K_{\widetilde{G}_a}|\leq1$. Using (\ref{z2}), we deduce
\begin{eqnarray*}
\Delta_{n,1}(\mu)&\leq& I(\beta,\mu)-I(\alpha_{n+1},\mu)\leq\int_{\widetilde{G}_a}d(x,\beta)^r-d(x,\alpha_{n+1})^rd\mu(x)\\
&\leq&\int_{\widetilde{G}_a}d(x,\beta)^rd\mu(x)
=\mu(\widetilde{G}_a)|\widetilde{G}_a|^re^r_{t_a-1,r}(\lambda_{\widetilde{G}_a})
\\&\leq&\mu(\widetilde{G}_a)|\widetilde{G}_a|^r.
\end{eqnarray*}
This, together with (\ref{onenth}) and (\ref{gadiameter}), yields
\begin{equation}\label{z3}
\Delta_{n,1}(\mu)\leq n_3\big(\frac{15}{2}\big)^{2q}C_2\big(\frac{39}{4}\big)^{s_0+r}2^{s_0+r}m^{-k(s_0+r)}\lesssim\frac{1}{n}e^r_{n,r}(\mu).
\end{equation}

Now let $\alpha_n\in C_{n,r}(\mu)$ and $a\in\alpha_n$ and $\gamma_{t_a+1}\in C_{t_a+1,r}(\lambda_{\widetilde{G}_a})$. Set
\[
\beta_{n+1}:=(\alpha_n\setminus\beta(a))\cup h_{\widetilde{G}_a}(\gamma_{t_a+1}).
\]
Then ${\rm card}(\beta_{n+1})\leq n+1$. Hence, we have $e^r_{n+1,r}(\mu)\leq I(\beta_{n+1},\mu)$.
Note that (\ref{z2}) remains true. Thus, by (\ref{ga}) and Lemma \ref{pre2} with $k=t_a$, we deduce
\begin{eqnarray*}
\Delta_{n,1}(\mu)&\geq& I(\alpha_n,\mu)-I(\beta_{n+1},\mu)\\
&\geq&\int_{\widetilde{G}_a}d(x,\alpha_n)^r-d(x,\beta_{n+1})^rd\mu(x)\\
&\geq&\int_{\widetilde{G}_a}d(x,\beta_{n+1})^rd\mu(x)-\int_{\widetilde{G}_a}d(x,h_{\widetilde{G}_a}(\gamma_{t_a+1}))^rd\mu(x)
\\&=&\mu(\widetilde{G}_a)|\widetilde{G}_a|^r\big(e^r_{t_a,r}(\lambda_{\widetilde{G}_a})-e^r_{t_a+1,r}(\lambda_{\widetilde{G}_a})\big)
\\&\gtrsim&\mu(\widetilde{G}_a)|\widetilde{G}_a|^r.
\end{eqnarray*}
Note that $\widetilde{G}_a$ contains some closed ball $A_\sigma$ with $\sigma\in\Omega_k$. It follows by (\ref{onenth}) that
\begin{equation}\label{z4}
\Delta_{n,1}(\mu)\gtrsim C_1m^{-k(s_0+r)}\gtrsim\frac{1}{n}e^r_{n,r}(\mu).
\end{equation}
Combining (\ref{z3}) and (\ref{z4}), we obtain
\[
e^r_{n,r}(\mu)-e^r_{n+1,r}(\mu)=\Delta_{n,1}(\mu)\asymp\frac{1}{n}e^r_{n,r}(\mu).
\]
This completes the proof of the theorem.

\end{document}